\setlist[enumerate]{leftmargin=.5in}
\setlist[itemize]{leftmargin=.5in}
\DeclareMathOperator{\conv}{conv}
\begin{document}

\title{Stochastic Tverberg theorems and their applications in multi-class logistic regression, data separability, and centerpoints of data}
\author{J.A. De Loera and T.A. Hogan}

\maketitle

\begin{abstract} We present new stochastic geometry theorems that give bounds on the probability that 
$m$ random data classes all contain a point in common in their convex hulls.  
We apply these stochastic separation theorems to obtain bounds on the probability of existence of maximum likelihood estimators in multinomial logistic regression. We also discuss connections to condition numbers for analysis of steepest descent algorithms in logistic regression and to the computation of centerpoints of data clouds.
\end{abstract}

\begin{keywords}
  Logistic Regression, Data Classification, Maximum Likelihood Estimation, Tverberg's Theorem, Geometric Probability, Centerpoints
\end{keywords}

\begin{AMS}
 47N30, 68T10, 62J02, 60D05, 52A37
\end{AMS}
\newtheorem*{theoremp}{Theorem}
\newcommand{\ba}{\boldsymbol{a}}
\newcommand{\bb}{\boldsymbol{\beta}}
\newcommand{\bc}{\boldsymbol{c}}
\newcommand{\bd}{\boldsymbol{d}}
\newcommand{\w}{\boldsymbol{w}}
\newcommand{\p}{\boldsymbol{p}}
\newcommand{\x}{\boldsymbol{x}}
\newcommand{\y}{\boldsymbol{y}}
\newcommand{\q}{\boldsymbol{q}}
\newcommand{\z}{\boldsymbol{z}}
\newcommand{\cc}{\boldsymbol{c}}
\newcommand{\R}{\mathbb{R}}
\newcommand{\CA}{\mathcal{C}}
\newcommand{\Z}{\mathbb{Z}}
\newcommand{\lip}{\text{Lip}}
\newcommand{\ff}{\mathcal{F}}
\newcommand{\X}{\boldsymbol{X}}

\newcommand{\Ne}{\mathbb{Ne}}

\newcommand{\pr}{\mathbb{P}}

\section{Introduction}

This paper shows how methods from stochastic convex geometry can be successfully used in the foundations of data science. 
Before we discuss the geometric results, we discuss their implications:

Logistic regression is perhaps the most widely used non-linear model in multivariate statistics and supervised learning~\cite{GenLinMod}. Statistical inference for this model relies on the theory of maximum likelihood estimation. In the binary classification case, given $n$ independent observations $(\x_i, y_i), i  = 1, \dots, n$, logistic regression links the response $y_i \in \{-1,1\}, i = 1, \dots, n$ to the 
covariates $\x_i \in \mathbb{R}^d$ via the logistic model 
$$ \pr( y_i = 1 | \x_i) = \sigma(\x_i'\bb) ,\hspace{.5cm} \sigma(t) := \frac{e^t}{1 + e^t};$$
here $\bb \in \R^p$ is the unknown vector of regression coefficients. In this model, the \emph{log-likelihood} is given by
$$l(\bb) = \sum_{i = 1}^{n} - \log(1 + \exp(-y_i\x_i'\bb))$$
and, by definition, the \emph{maximum likelihood estimate} (MLE) is any maximizer of this functional. The basic intuition behind 
this method is as follows: we seek coefficients $\bb$ so that $\pr( y_i = 1 | \x_i)$ corresponds as closely as possible with the observations $(\x_i, y_i)$.   For example, if $\x_i'\bb$ and $y_i$ have different signs, there is a larger ``penalty" expressed in the log-likelihood, since in that case $\exp(-y_i\x_i'\bb) > 1$. See \cite{ISLR} for further discussion and examples.

 One difficulty arising in machine learning is that the MLE does not exist in all situations. In fact, given two data classes, say one of red points (where $y_i = 1$), and one of blue points (where $y_i = -1$), it is well-known that an MLE exists if and only if the convex hulls of the blue points intersects the convex hull of the red points \cite{albertanderson,silvapulle}. Although an appealing criterion for existence, this geometric characterization leads to another question: {\it How much training data do we need, as a function of the dimension of the covariates of the data, before we expect an MLE to exist with high probability?}

The seminal work of Cover~\cite{cover2color} (adapting a technique originally due to Schl\"afli~\cite{schlafli}) provides an answer in a special case. When applied to logistic regression, Cover's main result states the following: assume that the $\x_i$'s are drawn i.i.d. from a continuous probability distribution $F$ and that the class labels are independent from $\x_i$, and have equal marginal probabilities; i.e., 
$\pr(y_i = 1| \x_i ) = 1/2$. Then Cover showed that as $d$ and $n$ grow large in such a way that $d/n \rightarrow k$, the convex hulls of the data points asymptotically overlap - with probability tending to one - if $k < 1/2$, whereas they are separated - also with probability tending to one - if $k > 1/2$.  When the class labels are not independent from the $\x_i$, the problem is more difficult. In this case, Cand\`es and Sur~\cite{candessur} proved that a similar phase transition occurs, and is parameterized by two scalars measuring the overall magnitude of the unknown sequence of regression coefficients.

 Tukey introduced a notion of depth for a point $p$ relative to a data cloud $S$, as the smallest number of data points in a closed half-space with boundary through $p$ (see \cite{tukey1975, Rousseeuw1998} and references therein). We say a point $p$ has \emph{half-space depth $k$ in $S$} if that every half-space containing $p$ contains at least $k$ points in $S$.  
 A \emph{centerpoint} of an $n$ point data set $S \subset \R^d$ is a point $p$ such that every half-space containing $p$ has at least 
$\frac{n}{d+1}$ points in $S$, thus it is a point of depth at least $\frac{n}{d+1}$.  In a way a centerpoint is a generalization of the notion of median for high-dimensional data. Centerpoints are useful in a variety of applications (see e.g., \cite{jesus+xavier+frederic+nabil} for references). Unfortunately, obtaining a centerpoint is difficult, and the current best randomized algorithm constructs a centerpoint in time $O(n^{d+1} + n\log n)$\cite{chan, Miller+Sheeny2010}. Thus finding an approximate centerpoint of a set is of interest.


\section*{Consequences of our geometric results}~\label{discuss}

The first contribution of our paper is to further develop the connection between  geometric probability (Cover's result), discrete geometry (Tverberg-type results), and the conditions for the existence of MLEs.  Our paper discusses the generalization of Cover's stochastic separation problem to more than two colors by studying so-called \emph{Tverberg partitions}- a partition of a data set into classes so that the intersection of all the convex hulls of the classes is nonempty. 


Each of our stochastic-geometric theorems has a nice implication. Table~\ref{MLEexist} summarizes our theorems (middle column) as  well as their consequences to the existence of the Maximum-likelihood estimator in terms of the size of the data set (right column). 

\begin{table}[ht!]
  \begin{center}
    \begin{tabular}{l|c|c} 
      \textbf{Deterministic version} & \textbf{Stochastic version} & \textbf{Likely MLE Existence}\\

      \hline
      Radon & Cover's Theorem\cite{cover2color} & pair of data classes (mentioned above)\\
      \hline
      Tverberg & Thms~\ref{probtverberg},\ref{TTT} & all pairs of data classes \\ & & (Theorem \ref{mleexistthm} part 1.)\\
      \hline
      Radon with tolerance & Thm~\ref{stoctolradon}& pair of data classes with outliers removed\\ & & (Theorem \ref{mleexistthm} part 2.)\\
      \hline
      Tverberg with tolerance & Thms~\ref{probtverbergtolrand},\ref{probtverbergtol},\cite{soberon_2018} & all pairs of data classes with outliers removed\\ & & (Theorem \ref{mleexistthm} part 2.)
    \end{tabular}
    \vspace{.5cm}
    \caption{Stochastic analogues of Tverberg's theorem and their implications for existence of MLEs. By ``Likely MLE Existence", we mean that one can bound below the probability of MLE existence as a function of the number of input data points, according to the corresponding theorems in the ``Stochastic" column.}
    
    \label{MLEexist}
  \end{center}
\end{table}

%

There are two common approaches to extend binary classification to multi-class classification: \emph{``one-vs-rest"} and \emph{``one-vs-one"}. Suppose the data has $C > 2$ classes. In ``one-vs-rest", we train $C$ separate binary classification models. Each classifier $f_c$ for $c \in \{1,\dots, C\}$ is trained to determine whether or not an example is part of class $c$ or not. To predict the class for a new sample $\x$, we run all $C$ classifiers on $\x$ and choose the class with the highest score: $\hat{y} = \arg \max_{c \in \{1, \dots, C\} } f_c(\x).$  In ``one-vs-one" regression, we train $\binom{C}{2}$
 separate binary classification models, one for each possible pair of classes. To predict the class for a new sample $\x$
, we run all $\binom{C}{2}$  classifiers on  $\x$ and choose the class with the most votes.
 
To apply ``one-vs-one" multinomial logistic regression, we would like to ensure that the MLE exists between the data corresponding to every pair of labels. The next theorem applies our stochastic Tverberg theorem to give a sufficient condition for all these MLEs to exist with high probability (a sequence of events $X_n, n \geq 1$, occurs with \emph{with high probability} if 
$\lim_{n \rightarrow \infty} P(X_n) = 1$.)

\begin{theorem}[Stochastic Tverberg theorems applied to multinomial regression]
Fix $\epsilon > 0$.
Assume that the $\x_i$'s are drawn i.i.d. from a centrally symmetric continuous probability distribution $F$ on $\mathbb{R}^d$ and that the class labels are independent from $\x_i$, and have equal marginal probabilities; i.e., 
$\pr(y_i = k| \x_i ) = 1/m$ for all $k \in \{1, \dots, m\}$.\\
Then
\begin{enumerate}
\item Letting the number of data points $f(m)$ grow as a function of the number of labels $m$, the MLE exists between the data corresponding to every pair of labels with high probability as long as
$$f(m) \gg (1 + \epsilon)  m \log_2(m) \ln(\ln(m)).$$

\item Suppose the number of data points is $g(t)$, where we fix $m$ the number of labels, and $g(t)$ is a function of $t$- the number of outliers to be removed from the data set. Then the MLE exists between the data corresponding to every pair of labels with high probability if any $t$ points are removed, so long as $t \ll g(t)/m$.
\label{mleexistthm}
\end{enumerate} 
\end{theorem}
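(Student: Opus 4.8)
The plan is to reduce the existence of all pairwise MLEs to a single statement about a common point of convex hulls, and then to invoke the stochastic Tverberg theorems (Theorems~\ref{probtverberg} and~\ref{TTT} for part~1, the tolerant versions for part~2) together with a concentration argument on the random class sizes. First I would recall the Albert--Anderson/Silvapulle characterization quoted in the introduction: in the ``one-vs-one'' scheme the binary MLE between the data of labels $j$ and $k$ exists if and only if $\conv(S_j)\cap\conv(S_k)\neq\emptyset$, where $S_j$ denotes the points carrying label $j$. Hence ``the MLE exists between every pair of labels'' is equivalent to ``every pair among the $m$ convex hulls $\conv(S_1),\dots,\conv(S_m)$ meets.'' The crucial observation is that it suffices to produce a single point lying in all $m$ hulls at once: if $\bigcap_{k=1}^m \conv(S_k)\neq\emptyset$ then in particular every pair intersects. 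Thus it is enough to show that the random labelling $S_1,\dots,S_m$ forms a Tverberg partition with high probability, which is exactly the event controlled by the stochastic Tverberg theorems.

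For part~1, I would condition on the vector of class sizes $(n_1,\dots,n_m)$, which is multinomial with parameters $N=f(m)$ and $(1/m,\dots,1/m)$. Given the sizes, the classes are independent sets of i.i.d.\ draws from the centrally symmetric distribution $F$, so the center of symmetry (the origin) is the natural candidate Tverberg point, and Theorem~\ref{TTT} supplies a bound on $\pr(\mathbf{0}\notin\conv(S_k))$ in terms of $n_k$ and $d$. A union bound over the $m$ classes then yields
$$ \pr\Big( \bigcap_{k=1}^m \conv(S_k) = \emptyset \Big) \;\le\; \sum_{k=1}^m \pr\big(\mathbf{0} \notin \conv(S_k)\big). $$
I would combine this with the concentration of the multinomial sizes around $N/m=f(m)/m$ to replace each $n_k$ by roughly $f(m)/m$, and then verify that the right-hand side tends to $0$ precisely when $f(m)/m$ grows faster than $(1+\epsilon)\log_2(m)\ln(\ln(m))$; rearranging gives the stated threshold $f(m)\gg(1+\epsilon)\,m\log_2(m)\ln(\ln(m))$.

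For part~2 the structure is identical, except that requiring the MLE to survive the deletion of any $t$ outliers means we need the hulls to meet \emph{with tolerance} $t$, i.e.\ $\bigcap_k \conv(S_k\setminus R)\neq\emptyset$ for every choice of $t$ removed points $R$. Here I would invoke the stochastic tolerant Tverberg theorems (Theorems~\ref{probtverbergtolrand} and~\ref{probtverbergtol}); the condition $t\ll g(t)/m$ is exactly what guarantees that, after deleting any $t$ points, each of the $m$ classes still retains enough of its roughly $g(t)/m$ points for the tolerant common-intersection event to hold with high probability, and the same reduction as in part~1 turns this into existence of every pairwise MLE.

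The main obstacle will be the asymptotic bookkeeping in part~1: the per-class failure probability from the Wendel-type estimate inside Theorem~\ref{TTT} carries a polynomial-in-$n_k$ factor depending on the fixed dimension $d$, and the class sizes themselves fluctuate, so one must carefully balance the union bound over $m$ classes against both this tail and the multinomial deviations in order to extract the exact $(1+\epsilon)$ constant and the $\ln(\ln(m))$ correction rather than a cruder $\log_2(m)$ threshold. Controlling the smallest class size tightly enough that it does not spoil the union bound is the delicate technical point.
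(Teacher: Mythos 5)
Your reduction is exactly the one the paper intends: by the Albert--Anderson/Silvapulle criterion, pairwise MLE existence is pairwise hull intersection, and a single common point in $\bigcap_k \conv(S_k)$ (the center of symmetry of $F$ is the natural candidate) gives all $\binom{m}{2}$ intersections at once; part 2 becomes the tolerant version of the same statement. Note the paper gives no standalone proof of this theorem --- it is presented as a corollary of Corollary~\ref{probtverbergtolrand}(3) for part 1 and of the tolerant results (Theorem~\ref{probtverbergtol}, Corollary~\ref{probtverbergtolrand}(1), and Sober\'on's theorem) for part 2. Where you genuinely diverge is the probabilistic engine for part 1: the paper handles the random class sizes via the Erd\H{o}s--R\'enyi asymptotics for the minimum occupancy $N_n(m)$ in the urn model, concluding that with $\sim(1+\epsilon)m\log_2(m)\ln(\ln(m))$ points every class has at least $\log_2(m)(1+\epsilon/2)$ points w.h.p., and then applies Theorem~\ref{TTT}. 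You instead condition on the multinomial class sizes and use Chernoff-type concentration plus a union bound against the per-class Wagner--Welzl estimate. Your route is valid and arguably more elementary: at the stated growth rate the mean class size is $\Theta(\log m\,\ln\ln m)\gg\ln m$, so the lower-tail probabilities are $o(1/m)$, the minimum class size is $(1-o(1))f(m)/m$ w.h.p., and the per-class failure probability $2^{-n+1}\mathrm{poly}_d(n)$ union-bounds to zero over the $m$ classes. What the paper's Erd\H{o}s--R\'enyi route buys is precision in the harder regime where the mean occupancy is only $c\ln m$ and concentration at the mean fails (Poisson fluctuations there are of relative order one), which is exactly where the $\ln(\ln(m))$ factor is forced.

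Two local inaccuracies, neither fatal to sufficiency. First, your claim that the union bound vanishes ``precisely when'' $f(m)/m$ exceeds $(1+\epsilon)\log_2(m)\ln(\ln(m))$ is not right: on your own accounting, once the class sizes concentrate at $f(m)/m$, the bound already vanishes when $f(m)/m\geq(1+\epsilon)\log_2 m$. The $\ln\ln m$ correction is not produced by the Wendel-type per-class bound at all; it comes from the minimum-occupancy problem (guaranteeing \emph{every} class reaches $\sim\log_2 m$ points), which your concentration step simply bypasses because the theorem only asserts sufficiency at the larger rate. So your ``delicate technical point'' at the end is exactly the issue the paper's Erd\H{o}s--R\'enyi step resolves, but at the stated rate it is not delicate. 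Second, the bound on $\pr(\mathbf{0}\notin\conv(S_k))$ is the Wagner--Welzl estimate used inside the proof of Theorem~\ref{probtverberg}, not a statement of Theorem~\ref{TTT}, which is the equi-partition threshold result --- a labeling slip only. Your part 2 matches the paper's intent: with $t\ll g(t)/m$ and $m,d$ fixed, either Sober\'on's bound or the grouping argument of Theorem~\ref{probtverbergtol} (tolerance $\sim g(t)/(4dm)$, which dominates any $t=o(g(t)/m)$ for fixed $d$) yields the tolerant common point w.h.p., and the same pairwise reduction finishes the argument.
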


The same bound applies to ``one-vs-rest" logistic regression, since  MLE existence in that case is a weaker condition. The various special cases of Stochastic Tverberg theorems are thus useful in different kinds of classification problems, and these observations are summarized in Table~\ref{MLEexist}.

The last two rows of the table of summarizing our results was motivated by the challenge of dealing with outlier data and seeking robust classification of data, we rely on an additional parameter: \emph{tolerance}. A $t$-tolerant partition, which will be defined formally later, 
but it is a notion of ``robust"  intersection in the sense that the intersection of the convex hulls of the subsets remains non-empty even  
after any  $t$ points are removed. See Figure~\ref{tolex} for an example of a $1$-tolerant partition. 

\begin{figure}[hb]
\center{
\includegraphics[scale=1]{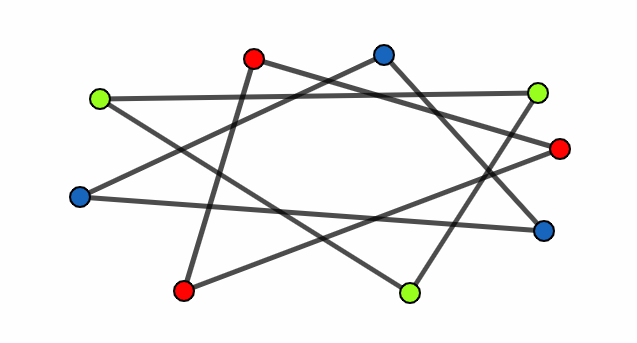}}

\caption{A partition in three data classes with tolerance one. All three convex hulls intersect even after any one point is removed.}
\label{tolex}
\end{figure}
 
The parameter of tolerance is also significant in studying MLE existence. A natural observation is that $t$ tolerant partitions correspond to robust MLE existence. Any $t$ points, possibly corrupted or outlier data, can be removed and still the convex hulls of the data with each label intersect.

In fact, the parameter of tolerance is also similar to an important parameter used to guarantee to the convergence speed of first order methods for finding MLEs. Recently, when studying binomial logistic regression,  Freund, Grigas and Mazumunder~\cite{grigas} introduced the following notion to quantify the extent that a dataset is non-separable (where $a^- := -\min\{a,0\}$ denotes the negative part of $a$):


$$ \text{ DegNSEP*} := \min_{\bb \in \mathbb{R}^p} \frac{1}{n} \sum_{i=1}^n [y_i \bb^T \x_i]^-$$
$$\text{ s.t. } \| \bb \| =1.$$

DegNSEP*
is thus the smallest (over all normalized models $\bb$) average misclassification
error of the model $\bb$ over the $n$ observations.
They showed that the condition number DegNSEP* informs the computational properties and guarantees of the standard deterministic first-order steepest descent solution method for logistic regression.
Let us now briefly discuss how the parameter of tolerance for Radon partitions (Tverberg 2-partitions) can be viewed as a discrete analogue of DegNSEP*. 

Define PertSEP* as the smallest (or more precisely, the infimum thereof) perturbation $\Delta \X$ of the feature data $\X$ which will render the perturbed problem instance $(\X + \Delta \X, y)$ separable. Namely, 
$$\text{PertSEP*} := \inf_{\Delta \X} \frac{1}{n} \|\Delta\|_{\cdot,1}$$
$$\text{ s.t. } (\X + \Delta \X, y) \text{ is separable.}$$
In Proposition 2.4 of \cite{grigas} it is shown that DegNSEP* = PertNSEP*.

In this paper we introduce a new parameter $\text{PertSEP*}_0$ simply defined as the $L_0$ 
norm of the smallest perturbation of the feature data $\X$ which will render the perturbed problem 
instance $(\X + \Delta \X, y)$ separable. In other words, it is the minimal number of data points we could move to make the data set separable, normalized by the total number of data points. Namely, 
$$\text{PertSEP*}_0 := \inf_{\Delta \X} \frac{1}{n} \|\Delta\|_{\cdot,0}$$
$$\text{ s.t. } (\X + \Delta \X, y) \text{ is separable.}$$

The following theorem shows that the tolerance of a Radon partition is given by $\text{PertSEP*}_0$:
\begin{theorem}
Suppose that $\X = \X_1 \cup \X_2$, $|\X| = n$ is a Radon partition with tolerance precisely equal to $t$. Then viewing $\X$ as a labeled dataset (with $(\X, y) = \{(\x,-1) : \x \in \X_1 \} \cup \{(\x,1,): \x \in \X_2\}$), we have that 
$$\text{PertSEP*}_0 = t/n.$$
\label{discretesep}
\end{theorem}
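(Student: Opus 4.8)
The plan is to reduce the analytic quantity $\text{PertSEP*}_0$ to a purely combinatorial counting problem about \emph{removing} points, and then to identify that count with the tolerance $t$. Throughout, I would view $\X$ as a matrix whose columns are the data points, so that the $L_0$-norm of the perturbation $\Delta\X$ counts the number of columns of $\Delta\X$ that are nonzero, i.e. the number of data points that are actually displaced. Thus $n \cdot \text{PertSEP*}_0$ is exactly the minimum number of points one must move (to arbitrary new locations) so that the resulting labeled set becomes separable, where separability of the perturbed set means $\conv(\X_1') \cap \conv(\X_2') = \emptyset$ (equivalently, the existence of a hyperplane strictly separating the two classes, which is always available for disjoint compact convex sets).

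First I would prove the key equivalence: \emph{the minimum number of points that must be moved to make $(\X,y)$ separable equals the minimum number of points that must be removed from $\X$ so that the convex hulls of the two remaining classes become disjoint.} For the inequality ``moves $\geq$ removes,'' suppose displacing a set $M$ of points yields data separated by a hyperplane $H$. The points of $\X_1 \setminus M$ and $\X_2 \setminus M$ are left in place, so $H$ still strictly separates them; hence $\conv(\X_1 \setminus M)$ and $\conv(\X_2 \setminus M)$ are already disjoint, and removing the $|M|$ points of $M$ destroys the intersection. For the reverse inequality, suppose removing a set $R$ makes $\conv(\X_1 \setminus R)$ and $\conv(\X_2 \setminus R)$ disjoint; as these are compact convex sets, some hyperplane $H$ strictly separates them. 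I would then move each point of $R$ to the correct side of $H$ (to the $\X_1$-side if it is labeled $-1$, to the $\X_2$-side if labeled $+1$), which renders the full perturbed set separable by $H$ while displacing only $|R|$ points. Taking minima in both directions gives the equality.

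Finally I would invoke the definition of tolerance: since $\X = \X_1 \cup \X_2$ is a Radon partition of tolerance precisely $t$, the smallest number of points whose removal makes the two convex hulls disjoint (equivalently, makes their intersection empty) is exactly $t$. Combining this with the equivalence above yields $n \cdot \text{PertSEP*}_0 = t$, i.e. $\text{PertSEP*}_0 = t/n$.

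The main obstacle is getting the separating-hyperplane bookkeeping exactly right in both directions, and in particular handling strictness carefully: I must ensure that ``separable'' is taken to mean strict separation / genuinely disjoint hulls, so that (i) in the forward direction the unmoved points inherit a strict separator and (ii) in the reverse direction the disjointness of the reduced hulls guarantees a hyperplane with positive margin into which the removed points can be reinserted. A secondary point to verify is the precise off-by-one in the tolerance convention, namely that ``tolerance precisely $t$'' is the minimum number of deletions that empties the intersection, so that the final constant is $t$ rather than $t \pm 1$.
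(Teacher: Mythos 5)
Your proposal is correct and takes essentially the same approach as the paper: both prove $n\cdot\text{PertSEP*}_0$ equals the minimal number of removals by the same two-sided argument (a separating hyperplane for the moved configuration still separates the unmoved points, and conversely removed points can be reinserted on the correct sides of the hyperplane separating the reduced hulls), and then identify that removal count with the tolerance. Your explicit attention to strict separation and to the off-by-one in the tolerance convention only spells out details the paper's shorter proof passes over silently.
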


Theorem~\ref{discretesep}, combined with a result of Sober\'on, has a corollary, stated precisely in the next section, which roughly says that $\text{PertSEP*}_0$ of a randomly bi-partitioned point set asymptotically approaches $1/2$. This is the highest possible value one could hope for since, by definition, $\text{PertSEP*}_0$ of \emph{any} two class data set is bounded above by $1/2$. In fact, this result extends easily to the multi-class setting. In other words, for a large randomly $m$-partitioned data set, we expect $\text{PertSEP*}_0$ of every pair of data classes to be close to $1/2$ - independent of both the dimension of the covariates, as well as the number of classes $m$.

For further discussion of $\text{PertSEP*}$ and $\text{DegNSEP}*$ for two-class data, including more probabilistic aspects of these condition numbers and many interesting implications for steepest descent algorithms, see \cite{grigas}.

We also discuss how our geometric probability results are related to the problem of computing approximations to centerpoints of datasets. Table \ref{tab:table2} and the discussion that follows summarize our contributions.
Tverberg's theorem implies that every data set has a centerpoint, as the Tverberg intersection point of  a Tverberg partition must be a point of half-space depth one in each of the $m = \lceil \frac{n}{d+1}\rceil$ color classes. Hence an effective version is desirable as a method to obtain centerpoints. The proof of Radon's lemma is constructive and, in fact, one of the most notable randomized algorithms for computing approximate centerpoints works by repeatedly replacing subsets of $d + 2$ points by their Radon point. In contrast, no known polynomial time algorithm exists for computing exact Tverberg points. Thus, fast algorithms for approximate Tverberg points have been introduced in \cite{CEMST96,MW13,RS16}. If one is interested in probabilistic algorithms for finding Tverberg partitions, the main results of our paper 
can be used to give  expected performance of algorithms where we obtain Tverberg partitions by random choice, so long as the points come from a 
 balanced distribution.

In particular, our Theorem~\ref{TTT} suggests a trivial algorithm for finding a Tverberg partition among a set of i.i.d. points drawn from a distribution which is balanced about a point $\p$. According to Theorem~\ref{TTT}, a random equipartition of $m$ such points into less than $m/log_2(m)$ sets should produce a Tverberg partition with high probability. This trivial non-deterministic algorithm was also suggested by Sober\'on, except using a random allocation rather than equi-partition. Our asymptotic results improve the bounds on expected performance of Sober\'on's proposed algorithm (random allocation) for points from a balanced distribution as well. We summarize the performance and time complexity of various algorithms for obtaining Tverberg partitions, including our own (last two rows),  in Table \ref{tab:table2}.
\begin{table}[h!]
  \begin{center}
    \begin{tabular}{l|c|r} 
      \textbf{Method} & \textbf{Number of Colors} & \textbf{Time complexity}\\

      \hline
      Tverberg & $\lfloor (m+1)/(d+1) \rfloor$ & PPAD (unknown if polynomial)\\
      \hline
      Mulzer, Werner~\cite{MW13}&$m / (4d+1)^3 $& $d^{O(\log{d})}m$\\
      \hline
      
      Rolnick, Sober\'on ~\cite{RS16}& $m / d(d+1)^2 $ with error prob. $\epsilon$ & weakly poly. in $m$,$d$ and $\log(1/\epsilon)$\\
      \hline
      Random equi-partition & $O(\frac{m}{\log_2(m)})$& $O(m)$\\
      \hline
      Random Allocation &  $O(\frac{m}{\log_2(m)(\ln(\ln(m)))})$& $O(m)$
    \end{tabular}
    
    \caption{Approximate Tverberg Partitions for balanced distributions.}
    
    \label{tab:table2}
  \end{center}
\end{table}

%

Section~\ref{TTypeThms} presents our geometric tools and results, and Section~\ref{bounds} contains the proofs of our new results. 

\section{Our geometric methods: Stochastic Tverberg-type theorems}~\label{TTypeThms}

We begin by remembering Tverberg's celebrated theorem~\cite{Tv} which generalizes Radon's lemma to $m$-partitions (see \cite{barany+soberonsurvey, jesus+xavier+frederic+nabil} for references and the importance of this theorem):

\begin{theoremp}[Theorem: (H. Tverberg 1966)] Every set $S$ with at least $(d+1)(m-1)+1$ points in Euclidean 
$d$-space has at least one $m$-Tverberg partition (with tolerance zero). 
\end{theoremp}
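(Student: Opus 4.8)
\emph{Proof proposal.} The plan is to deduce Tverberg's theorem from the \emph{colorful Carath\'eodory theorem} via Sarkaria's tensor trick, which I regard as the cleanest route. Recall the colorful Carath\'eodory statement: if $C_1,\dots,C_{D+1}$ are subsets of $\R^D$, each of whose convex hulls contains the origin, then one can pick a rainbow point $c_i\in C_i$ with $0\in\conv\{c_1,\dots,c_{D+1}\}$. If this tool is not taken as given, I would first establish it by a local-exchange (variational) argument: start from any rainbow simplex, and repeatedly swap a single vertex so as to strictly decrease the distance from $0$ to the rainbow simplex, then argue the process must terminate with $0$ inside.

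Set $D=(d+1)(m-1)$, so the hypothesis supplies exactly $N=(d+1)(m-1)+1=D+1$ points $x_1,\dots,x_N\in\R^d$. Lift each point to $\hat x_i=(1,x_i)\in\R^{d+1}$ and fix vectors $v_1,\dots,v_m\in\R^{m-1}$ that are the vertices of a simplex centered at the origin, so that $\sum_{j=1}^m v_j=0$ and the only linear dependence among them is this one. For each $i$ define the color class
\[
C_i=\{\,\hat x_i\otimes v_1,\ \hat x_i\otimes v_2,\ \dots,\ \hat x_i\otimes v_m\,\}\subset\R^{d+1}\otimes\R^{m-1}\cong\R^{D}.
\]
Because $\tfrac1m\sum_j \hat x_i\otimes v_j=\hat x_i\otimes\bigl(\tfrac1m\sum_j v_j\bigr)=0$, each $C_i$ has the origin in its convex hull, and since there are $D+1$ color classes the colorful Carath\'eodory theorem applies.

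Applying it yields, for each $i$, a chosen color $\sigma(i)\in\{1,\dots,m\}$ together with weights $\lambda_i\ge 0$ satisfying $\sum_i\lambda_i=1$ and $\sum_i \lambda_i\,\hat x_i\otimes v_{\sigma(i)}=0$. Grouping by color, set $A_j=\{x_i:\sigma(i)=j\}$ and $w_j=\sum_{i:\sigma(i)=j}\lambda_i\hat x_i$; then $\sum_{j}w_j\otimes v_j=0$. The decoding step is the crux: since the sole dependence among the $v_j$ is $\sum_j v_j=0$, testing this tensor identity against linear functionals forces $w_j-w_k=0$ for all $j,k$, i.e. all $w_j$ equal a common $w\in\R^{d+1}$. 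Reading off the homogenizing (first) coordinate of $\hat x_i=(1,x_i)$ shows each color has total weight $\mu_j=\sum_{i:\sigma(i)=j}\lambda_i$ equal to the common value $1/m>0$, so every $A_j$ is nonempty; reading off the remaining $d$ coordinates shows that the point $p=m\,w^{(\mathrm{space})}$ (where $w^{(\mathrm{space})}$ denotes the last $d$ coordinates of $w$) is a convex combination of $A_j$ for every $j$, hence $p\in\conv(A_j)$ for all $j$. Thus $(A_1,\dots,A_m)$ is the desired $m$-Tverberg partition.

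The main obstacle I anticipate is twofold: obtaining colorful Carath\'eodory (if not assumed) via a convergent exchange argument, and carrying out the tensor-decoding step cleanly, namely verifying that the single relation $\sum_j v_j=0$ is exactly what collapses the $w_j$ to a common point while simultaneously forcing every color class to carry positive weight. Everything else is bookkeeping with the counting $N=D+1$ and the convexity of the weights $\lambda_i$.
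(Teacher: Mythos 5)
Your proposal is correct, but it cannot be compared to the paper's argument for the simple reason that the paper offers none: Tverberg's theorem appears there as quoted background (a \verb|theoremp| environment citing Tverberg's original 1966 paper), and all of the paper's actual proofs concern the stochastic analogues. What you have written is the standard Sarkaria proof (as streamlined by B\'ar\'any and Onn), which is indeed the cleanest known route, and your execution of it checks out: the dimension count $D=(d+1)(m-1)$ makes the number of color classes exactly $D+1$; each $C_i$ contains the origin because $\sum_j v_j=0$; and the decoding step is sound, since applying an arbitrary functional $\phi\otimes\mathrm{id}$ to $\sum_j w_j\otimes v_j=0$ shows the coefficient vector $(\phi(w_j))_j$ lies in the one-dimensional dependence space of the $v_j$, hence is constant, so $w_1=\dots=w_m$; equal first coordinates then force $\mu_j=1/m>0$, which simultaneously guarantees nonempty classes and lets you rescale to get $p\in\conv(A_j)$ for every $j$. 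Two minor points you should tighten if writing this out in full. First, the hypothesis says \emph{at least} $(d+1)(m-1)+1$ points; run the argument on some $N=D+1$ of them and assign the remaining points to any class (convex hulls only grow), which is trivial but should be said. Second, your sketched exchange proof of colorful Carath\'eodory needs its termination argument stated carefully: take the rainbow selection minimizing $\mathrm{dist}(0,\conv\{c_1,\dots,c_{D+1}\})$; if the nearest point $x\neq 0$, the hyperplane through $x$ normal to $x$ separates, some color $i$ has a point $c_i'$ with $\langle c_i',x\rangle\le\langle x,x\rangle$ strictly improving the distance after swapping a vertex not needed in the representation of $x$, and finiteness of rainbow selections (or the minimality itself) gives the contradiction. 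With those two details pinned down, your proof is complete and entirely self-contained, unlike Tverberg's original inductive moving-points argument, and it serves the paper's purposes just as well as the citation does.
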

%

The notion of ``tolerant Tverberg theorems" was pioneered by Larman~\cite{Larman:1972tn} and refined over the years, such as in the following result due to Sober\'on and Strausz~\cite{sobstr}. Here is the precise definition:

\begin{definition}
Given a set $S \subset \R^d$, a \emph{Tverberg $m$-partition of $S$ with tolerance $t$} is a partition of $S$ into $m$ subsets $S_1, \dots S_m$  with the property that all $m$ convex hulls of the $S_i$ intersect after any $t$-points are removed. 
In other words, for all $\{\x_1, \dots, \x_t\} \in S$,  we have 
$$\bigcap_{i \in [m]} \conv(S_i \setminus \{\x_1, \dots, \x_t\}) \neq \varnothing.$$
\end{definition}

%
%
%

\begin{theoremp}[Theorem: (Sober\'on, Strausz 2012)]
Every set $S$ with at least $(t+1)(m-1)(d+1) + 1$ points in $\R^d$ has at least one Tverberg $m$-partition with tolerance $t$. 
In other words, $S$ can be partitioned into $m$ parts $S_1, \dots, S_m$ so that for all $\{\x_1, \dots, \x_t\} \in S$,  we have 
$$\bigcap_{i \in [m]} \conv(S_i \setminus \{\x_1, \dots, \x_t\}) \neq \varnothing.$$
\end{theoremp}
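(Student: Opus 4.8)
The plan is to induct on the tolerance $t$, reducing the statement to the ordinary Tverberg theorem. Write $N(d,m,t)$ for the least number of points in $\R^d$ that forces a Tverberg $m$-partition of tolerance $t$; the goal is then $N(d,m,t) \le (t+1)(m-1)(d+1)+1$, and the base case $t=0$ is exactly the Tverberg theorem recalled above, which needs $(m-1)(d+1)+1$ points. For the inductive step I would build a tolerance-$t$ partition by superimposing one \emph{fresh} ordinary Tverberg $m$-partition on top of a tolerance-$(t-1)$ partition of the remaining points. Concretely, split $S$ into a block $A$ of $(m-1)(d+1)+1$ points and a remainder of $N(d,m,t-1)$ points; apply Tverberg's theorem to $A$ to obtain $A = A_1 \cup \cdots \cup A_m$ with a common point $q \in \bigcap_i \conv(A_i)$, and apply the inductive hypothesis to the remainder to obtain a tolerance-$(t-1)$ partition $B_1 \cup \cdots \cup B_m$. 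The proposed output is the partition $S_i := A_i \cup B_i$.

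The correctness of this construction rests on a clean case split on the removed set $R$ with $|R| = t$. If $R$ misses $A$ entirely, then every $A_i$ is untouched, so $A_i \subseteq S_i \setminus R$ and hence $q \in \bigcap_i \conv(A_i) \subseteq \bigcap_i \conv(S_i \setminus R)$; if $R$ meets $A$ in at least one point, then $R$ deletes at most $t-1$ points from the remainder block, so the tolerance-$(t-1)$ guarantee for $B_1,\dots,B_m$ furnishes a point in $\bigcap_i \conv(B_i \setminus R) \subseteq \bigcap_i \conv(S_i \setminus R)$. Either way the intersection is nonempty, so the partition has tolerance $t$. Note that the \emph{witness} point is allowed to move with $R$ (it is $q$ in one branch and the induction's witness in the other). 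It is worth recording the equivalent reformulation that a partition has tolerance $t$ with a \emph{fixed} witness $p$ precisely when $p$ has halfspace depth at least $t+1$ in every block $S_i$; since halfspace depth is superadditive over a union of parts each already containing $p$ in its hull, a fixed witness would force each block to absorb $t+1$ whole Tverberg parts, which is exactly why the witness must be permitted to move and why a single fixed-witness argument cannot reach the optimal count.

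The main obstacle is the bookkeeping of the additive constant. Carried out as above, the recursion reads $N(d,m,t) \le N(d,m,t-1) + (m-1)(d+1) + 1$, because the fresh block $A$ costs $(m-1)(d+1)+1$ points; unrolling this loses one extra point per level and yields only $(t+1)(m-1)(d+1) + t + 1$, which is precisely the bound obtained from the transparent ``$t+1$ disjoint Tverberg partitions plus pigeonhole'' argument. To reach the advertised value $(t+1)(m-1)(d+1)+1$ one must shave the $+1$ at every step, i.e.\ force the fresh Tverberg partition and the inductive tolerance-$(t-1)$ partition to \emph{share} a single point of $S$ rather than consume disjoint $+1$ overheads. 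Making this sharing rigorous is the delicate heart of the proof: in the case analysis the shared point is only ever needed by one of the two sub-partitions at a time, so assigning it to a single block of the final partition should validate both branches, but verifying this without breaking either guarantee is exactly where the care is required. I expect this tight accounting, rather than any genuinely new geometric input beyond Tverberg's theorem, to be the real work.
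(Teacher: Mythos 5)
Your induction, as carried out, is correct but proves only the weaker bound $N(d,m,t) \le (t+1)\bigl((m-1)(d+1)+1\bigr) = (t+1)(m-1)(d+1)+t+1$, and you candidly say so; the trouble is that the final sharpening you defer --- making the fresh Tverberg block and the tolerance-$(t-1)$ block share one point --- is not a bookkeeping detail but the step that fails. A shared point $x$ must be assigned to exactly one block of the final partition, yet the two sub-partitions may place it in different classes. If you assign $x$ according to the fresh Tverberg partition of $A$, then in the branch where $R$ meets $A$ but $x \notin R$, the tolerance-$(t-1)$ partition of the remainder suffers up to $t-1$ genuine removals \emph{plus} the misassignment of $x$, i.e.\ effectively $t$ deviations, exceeding its budget; if instead you assign $x$ according to the $B$-partition, then in the branch $R \cap A = \varnothing$ the block $A_i$ containing $x$ is no longer a subset of $S_i$, and since the Tverberg partition of $A$ has tolerance zero, the witness $q$ is lost. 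Neither branch can absorb the misassignment, so the recursion $N(d,m,t) \le N(d,m,t-1) + (m-1)(d+1)$ that you would need does not follow from this construction, and your argument stalls at the transparent pigeonhole bound.

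For what it is worth, the paper itself does not prove this statement: it quotes it from Sober\'on and Strausz, whose actual proof is not an iterated-Tverberg induction at all --- it runs through Sarkaria's tensor trick together with B\'ar\'any's colorful Carath\'eodory theorem, which is precisely the ``genuinely new geometric input beyond Tverberg's theorem'' that you predicted would be unnecessary. Your side remark on fixed witnesses is sound (a fixed witness $p$ works for tolerance $t$ iff $p$ has halfspace depth at least $t+1$ in each block, which is why the witness must move with the removed set $R$), but the main conclusion stands: the proposal establishes $(t+1)(m-1)(d+1)+t+1$, not the stated $(t+1)(m-1)(d+1)+1$, and the gap between them is exactly the theorem's content.
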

More recently, P. Sober\'on proved the following bound~\cite{soberon_2018}. Let $N$ denote the smallest positive integer such that a Tverberg $m$-partition with tolerance $t$ exists among any $N$ points in dimension $d$. Then $N = mt+ O(\sqrt{t})$ for fixed $m$ and $d$. The proof of this result relies on the probabilistic method and, as Sober\'on remarked, can in fact be used to prove a Stochastic Tverberg-type result, which we will revisit later.

\subsection*{Prior Stochastic Tverberg theorems}

Before stating our main results, we introduce two models for random partitioned data point sets. In both models will use the term colors instead of subsets, for ease of notation.
Hereafter, when we refer to a continuous distribution on $\R^d$, we mean continuous with respect to the Lebesgue measure on $\R^d$. We defer proofs of the new results stated until the next section.

Our first model is a so-called \emph{random equi-partition model} i.e., we ensure that every color has the same number of points. More specifically, given integers $m$ and $n$ and a continuous probability distribution $D$ on $\mathbb{R}^d$, we let $\mathcal{E}_{m,n,D}$ denote a random equi-partitioned point set with $mn$ points, consisting of $m$ colors, and $n$ points of each color, distributed independently according to $D$.

Our second model is a \emph{random allocation model}: Given integers $k$ and $m$ and a continuous probability distribution $D$ on $\mathbb{R}^d$, we let $\mathcal{R}_{m,k,D}$ denote a random point set with $k$ points i.i.d. according to $D$, which are randomly colored one of $m$ colors with uniform probability ($1/m$ for each color).


For example, using these models we can state Cover's result as follows:
\begin{theoremp}[Theorem: (T. Cover 1965)]
If $D$ is a continuous probability distribution on $\R^d$, then
$$ \pr(\mathcal{R}_{2,n,D} \text{ is Radon}) = 1 - 2^{-n+1}\sum_{k = 0}^{d}\binom{n-1}{k}.$$

In particular, we have $$ \pr(\mathcal{R}_{2,2(d+1),D} \text{ is Radon}) = 1/2.$$ Furthermore, for any $\epsilon > 0$ and any sequence of continuous probability distributions $\{D_i\}, i\in \mathbb{Z}_+$ where each $D_d$ is a distribution on $\R^d$, we have
$$\lim_{i \rightarrow \infty} \pr(\mathcal{R}_{2,(1+\epsilon)2i,D_i} \text{ is Radon}) = 1$$
and
$$\lim_{i \rightarrow \infty} \pr(\mathcal{R}_{2,(1-\epsilon)2i,D_i} \text{ is Radon}) = 0$$
\end{theoremp}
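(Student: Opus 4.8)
The plan is to reduce this stochastic statement to a deterministic enumeration — Cover's counting formula for the number of linearly separable dichotomies of points in general position — and then to read off the limits from binomial tail bounds. First I would translate the Radon condition into a statement about separability. The labeled set $\mathcal{R}_{2,n,D}$ is Radon exactly when the convex hulls of its two color classes meet, and by the affine separating-hyperplane theorem this fails if and only if the two classes can be separated by an affine hyperplane. Thus $\pr(\mathcal{R}_{2,n,D}\text{ is Radon}) = 1 - \pr(\text{the two classes are linearly separable})$. To count separations in a uniform way I would lift each sampled point $\x_i \in \R^d$ to $(\x_i,1) \in \R^{d+1}$, so that affine separability in $\R^d$ becomes homogeneous (through-the-origin) separability in $\R^{d+1}$.

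Next I would condition on the point locations. Because $D$ is continuous with respect to Lebesgue measure, with probability one the $n$ sampled points are in general position, hence the $n$ lifted points are in general position in $\R^{d+1}$. The coloring is independent of the locations and uniform over the $2^n$ dichotomies, so conditionally $\pr(\text{separable}) = C(n,d+1)/2^n$, where $C(N,q)$ denotes the number of dichotomies of $N$ general-position points in $\R^{q}$ that are realizable by a homogeneous hyperplane. The key point is that this conditional value is the same for every general-position configuration, so it equals the unconditional probability and the continuous measure enters only through this almost-sure general-position statement.

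The heart of the argument is the combinatorial count. I would establish Cover's (Schl\"afli) recursion $C(N+1,q) = C(N,q) + C(N,q-1)$ by introducing one additional point and observing that a separable dichotomy of the first $N$ points extends in two ways to all $N+1$ points precisely when it is realizable by a separating hyperplane passing through the new point; these ``ambiguous'' dichotomies are in bijection with the $C(N,q-1)$ homogeneous separations within that point's orthogonal hyperplane. With the base cases $C(1,q)=2$ and $C(N,1)=2$, this Pascal-type recursion solves to $C(N,q)=2\sum_{k=0}^{q-1}\binom{N-1}{k}$. Taking $q = d+1$ gives $\pr(\text{separable}) = 2^{-n+1}\sum_{k=0}^{d}\binom{n-1}{k}$, and the stated formula follows. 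The value $1/2$ at $n = 2(d+1)$ then drops out of the symmetry $\binom{2d+1}{k} = \binom{2d+1}{2d+1-k}$, which splits the total $\sum_{k=0}^{2d+1}\binom{2d+1}{k} = 2^{2d+1}$ into two equal halves.

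Finally, for the asymptotics I would write $B \sim \text{Binom}(n-1,\tfrac12)$, so that $\pr(\text{separable}) = \pr(B \le d)$ and the Radon probability equals $\pr(B > d)$; the mean of $B$ is $(n-1)/2 \approx n/2$. For $n = (1+\epsilon)2i$ and $d = i$ the cutoff $d$ sits a constant fraction below the mean, so a Chernoff--Hoeffding bound forces $\pr(B \le d)\to 0$ and the Radon probability to $1$; for $n = (1-\epsilon)2i$ the cutoff sits above the mean and the same estimate gives $\pr(B \le d)\to 1$, so the Radon probability tends to $0$. I expect the main obstacle to be the combinatorial recursion itself — correctly identifying the ambiguous dichotomies and verifying their bijection with $C(N,q-1)$ — together with the measure-zero bookkeeping (strict versus weak separation, and points lying on the separating hyperplane) that the continuity of $D$ is precisely what allows me to dispatch.
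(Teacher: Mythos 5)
The paper offers no proof of this statement at all: it is quoted verbatim as Cover's 1965 theorem with a citation to \cite{cover2color}, so there is no internal argument to compare against. Your reconstruction is exactly the classical Cover--Schl\"afli proof that the citation points to --- reduce Radon to non-separability of disjoint compact hulls, lift $\x_i \mapsto (\x_i,1)$ to homogenize, condition on the almost-sure general-position event so that the count $C(n,d+1)=2\sum_{k=0}^{d}\binom{n-1}{k}$ from the recursion $C(N+1,q)=C(N,q)+C(N,q-1)$ is configuration-independent, then read the two limits off a Chernoff bound for $\mathrm{Binom}(n-1,\tfrac12)$ --- and it is correct, including the symmetry computation giving probability $1/2$ at $n=2(d+1)$ (modulo only the routine floor-function bookkeeping for $(1\pm\epsilon)2i$).
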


To the best of the authors' knowledge, the first generalization of Cover's 1964 result to more than two colors appeared only recently in Sober\'on's paper~\cite{soberon_2018}:

\begin{theoremp}[Theorem: P. Sober\'on 2018]
Let $N, t, d, m$ be positive integers and let $\epsilon > 0$ be a real number. Given $N$ points in $\R^d$, a random allocation of them into $m$ parts is a Tverberg partition with tolerance $t$ with probability at least $1-\epsilon$, as long as 
$$ t + 1 \leq N/m - \sqrt{\frac{1}{2}
\left[(d+1)(m-1)N\ln(Nm)+N\ln\left(\frac{1}{\epsilon}\right)\right]}.$$
\label{Pablo}
\end{theoremp}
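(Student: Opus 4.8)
The plan is to use the probabilistic method, transporting the whole question into a single ``$0$ lies robustly in a convex hull'' condition in a higher-dimensional space via Sarkaria's tensor trick, and then to control that condition with Hoeffding's inequality and a union bound. Set $D := (d+1)(m-1)$, and fix vectors $v_1,\dots,v_m \in \R^{m-1}$ that are the vertices of a simplex centered at the origin, so that $\sum_{i=1}^m v_i = 0$ and any $m-1$ of them are linearly independent. Given the fixed points $\x_1,\dots,\x_N$, write $\bar\x_j = (\x_j,1)\in\R^{d+1}$, and to a coloring $c:[N]\to[m]$ associate the lifted points $\hat\x_j := \bar\x_j \otimes v_{c(j)} \in \R^{D}$. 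Sarkaria's identity states that $\bigcap_{i\in[m]} \conv(S_i) \neq \varnothing$ (where $S_i = \{\x_j : c(j)=i\}$) if and only if $0 \in \conv\{\hat\x_j : j \in [N]\}$; since deleting the original point $\x_j$ is exactly deleting the single lifted point $\hat\x_j$, the same correspondence holds after any removal. The first step is therefore to record the clean reformulation: the random coloring yields a tolerance-$t$ Tverberg partition if and only if $0 \in \conv\{\hat\x_j : j\notin R\}$ for every $R\subseteq[N]$ with $|R|\le t$, which (after dealing with degenerate configurations, e.g.\ by a limiting argument) is equivalent to the statement that every open halfspace $H\subset\R^D$ whose bounding hyperplane passes through $0$ satisfies $|\{j : \hat\x_j \in H\}| \ge t+1$.

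Next I would estimate, for a single such halfspace, the probability that it is ``deficient.'' Fix a hyperplane through the origin with normal $w$ and reshape $w$ into a $(d+1)\times(m-1)$ matrix $W$, so that $\langle w, \hat\x_j\rangle = \langle W^{\!\top}\bar\x_j,\, v_{c(j)}\rangle$. For each fixed $j$, as the color $c(j)$ is chosen uniformly, the point $\hat\x_j$ lands in the open halfspace $H = \{z : \langle w,z\rangle>0\}$ with probability $|\{i : \langle W^{\!\top}\bar\x_j, v_i\rangle > 0\}|/m$. The crucial elementary observation is that because $\sum_i v_i = 0$, the numbers $\langle W^{\!\top}\bar\x_j, v_i\rangle$ sum to zero over $i$, so at least one of them is positive; hence this probability is at least $1/m$, \emph{for every direction $w$ and every $j$}. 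Consequently the count $X_H := |\{j:\hat\x_j\in H\}|$ is a sum of $N$ independent Bernoulli variables each with success probability at least $1/m$, so it stochastically dominates a $\mathrm{Bin}(N,1/m)$ variable, and Hoeffding's inequality gives $\pr(X_H \le t) \le \pr(\mathrm{Bin}(N,1/m)\le t) \le \exp\!\big(-2(N/m - (t+1))^2/N\big)$ whenever $t+1 \le N/m$.

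Finally I would assemble these estimates with a union bound. The event ``not tolerance-$t$'' is witnessed by a deficient open halfspace through $0$, and a standard perturbation argument lets one take its bounding hyperplane to pass through $0$ together with $D-1$ of the lifted points without increasing the count strictly inside. Since the witnessing points are a subset of the fixed collection of all $Nm$ lifted points $\bar\x_j\otimes v_i$ (over all $j$ and all colors $i$), it suffices to range over the fixed family of at most $\binom{Nm}{D-1}\le (Nm)^{(d+1)(m-1)}$ hyperplanes, a family specified before the coloring is revealed. Multiplying the per-hyperplane bound by this count and requiring the product to be at most $\epsilon$ yields $2(N/m-(t+1))^2/N \ge (d+1)(m-1)\ln(Nm) + \ln(1/\epsilon)$, which rearranges to the stated inequality on $t+1$.

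The step I expect to be the main obstacle is the reduction in the first paragraph: one must verify that Sarkaria's correspondence is genuinely ``removal-compatible,'' i.e.\ that deletions of original data points correspond bijectively to deletions of lifted points and that the equivalence with robust containment of the origin survives, and one must handle the general-position and boundary technicalities (points lying on the hyperplane, the origin on the boundary versus the interior of the hull) so that the clean halfspace-depth reformulation is exactly correct; this is delicate precisely because the $\x_j$ are arbitrary fixed points rather than random ones. The concentration and counting steps are then routine, and the matching of constants to the claimed bound is, as seen above, essentially forced by Hoeffding applied to $\mathrm{Bin}(N,1/m)$ together with the $(Nm)^{(d+1)(m-1)}$ count of candidate hyperplanes.
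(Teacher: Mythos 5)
This statement is quoted prior work (Sober\'on 2018, \cite{soberon_2018}); the paper gives no proof of it, only the remark that the proof ``relies on the probabilistic method.'' So the relevant comparison is with Sober\'on's original argument, and your proposal is essentially a correct reconstruction of it: Sarkaria's tensor trick with simplex vertices $v_1,\dots,v_m$ summing to zero, the observation that the values $\langle W^{\top}\bar\x_j, v_i\rangle$ sum to zero over colors so a uniformly random color succeeds with probability at least $1/m$, Hoeffding's inequality for the resulting dominated binomial, and a union bound over at most $(Nm)^{(d+1)(m-1)}$ candidate halfspaces. The constants you derive, $2\bigl(N/m-(t+1)\bigr)^2/N \ge (d+1)(m-1)\ln(Nm)+\ln(1/\epsilon)$, rearrange to exactly the stated inequality, which confirms the accounting is the intended one.

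One micro-step, which you correctly flag as the delicate point, is stated in a form that is literally false and should be repaired: the claim that each lifted point lands in a fixed \emph{open} halfspace with probability at least $1/m$ ``for every direction $w$ and every $j$'' fails when $W^{\top}\bar\x_j = 0$, in which case the entire fiber $\{\bar\x_j\otimes v_i\}_{i\in[m]}$ lies on the bounding hyperplane and the probability is $0$. The clean fix is to work with \emph{closed} halfspaces throughout: by strict-separation duality, the coloring is a tolerance-$t$ Tverberg partition \emph{if and only if} every closed halfspace whose bounding hyperplane passes through the origin contains at least $t+1$ of the lifted points (this equivalence is exact, with no general-position caveat, and it even forces each color class to have at least $t+1$ points, as one sees by taking $W = e_{d+1}(-v_i)^{\top}$), and the per-point bound $\pr\bigl(\langle w, \hat\x_j\rangle \le 0\bigr) \ge 1/m$ holds unconditionally, precisely because the $m$ values sum to zero so at least one is nonpositive. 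The union bound should then range not over perturbed witness hyperplanes but over the finitely many sign-pattern classes that central hyperplanes induce on the fixed set of $Nm$ candidate points (Cover/Wagner--Welzl-style counting, the same device the paper uses in its proof of Theorem~\ref{probtverberg}), of which there are at most polynomially many of order $(Nm)^{(d+1)(m-1)}$; one representative $w$ per class suffices for the Hoeffding estimate. This eliminates both the open/closed boundary issue and the degenerate-fiber case without any limiting argument, and with that substitution your proof is complete and matches Sober\'on's.
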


This result is quite remarkable. For any fixed $m,d,$ and $\delta$, it shows that the probability of a random allocation of  of $N$ points in $\R^d$ in $m$ colors having tolerance at least $(1-\delta)N/m$ approaches one as $N$ goes to infinity.
On the other hand, by pigeonhole principle, any allocation of $N$ points into $m$ colors must have one color with at most $N/m$ points. Thus, for a fixed number of colors $m$, the tolerance of a random partition is asymptotically as high as it could possibly be! By Theorem~\ref{discretesep}, this result yields the following corollary. 

\begin{corollary}\label{expectedpert}\sloppy For any sequence $\{\mathcal{R}_{(2,k,D)}\}, k \in \mathbb{N}$ of partitioned point sets with $D$ a distribution on $\R^d$, and any $\epsilon > 0$, we have $| \text{PertSEP*}_0(\mathcal{R}_{(2,k,d)}) - 1/2 |< \epsilon$ with high probability.

\end{corollary}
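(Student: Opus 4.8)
The plan is to sandwich $\text{PertSEP*}_0(\mathcal{R}_{(2,k,D)})$ between $1/2-\epsilon$ and $1/2$ with high probability. The upper bound will be an elementary deterministic fact; the lower bound will come from Sober\'on's probabilistic Tverberg theorem (Theorem~\ref{Pablo}) specialized to $m=2$, converted into a statement about $\text{PertSEP*}_0$ through Theorem~\ref{discretesep}.

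First I would record the deterministic inequality $\text{PertSEP*}_0 \le 1/2$, which holds for \emph{any} two-class labeled point set of size $k$. Indeed, by the pigeonhole principle one of the two color classes contains at most $\lfloor k/2\rfloor$ points, and translating every point of that smaller class to the far side of some hyperplane produces a separable instance; hence at most $\lfloor k/2\rfloor$ points must be moved and $\text{PertSEP*}_0 \le \lfloor k/2\rfloor/k \le 1/2$. This already gives the upper half of the desired inequality with probability one.

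For the lower bound, fix $\epsilon>0$ and apply Theorem~\ref{Pablo} with $m=2$, target tolerance $t=\lceil(1/2-\epsilon)k\rceil$, and a failure probability $\eta_k\to 0$ chosen to decay slowly, say $\eta_k=1/k$. With $m=2$ the admissibility condition of Theorem~\ref{Pablo} becomes $t+1\le k/2-\sqrt{\tfrac{k}{2}\bigl[(d+1)\ln(2k)+\ln(1/\eta_k)\bigr]}$, which after substituting $t$ and dividing by $k$ reduces to requiring $\sqrt{\tfrac{1}{2k}\bigl[(d+1)\ln(2k)+\ln(1/\eta_k)\bigr]}\le \epsilon - O(1/k)$. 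For fixed $d$ and $\ln(1/\eta_k)=o(k)$ the left-hand side is $O\!\bigl(\sqrt{\ln k/k}\bigr)$ and tends to $0$, so the inequality holds for all sufficiently large $k$. Theorem~\ref{Pablo} then guarantees that with probability at least $1-\eta_k\to 1$ the random partition has tolerance $T\ge t$. Since $t>0$ for large $k$, the intersection is nonempty, so the partition is Radon and Theorem~\ref{discretesep} yields $\text{PertSEP*}_0=T/k\ge t/k\ge 1/2-\epsilon$.

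Combining the two steps, with high probability $1/2-\epsilon\le \text{PertSEP*}_0\le 1/2$, so $|\text{PertSEP*}_0-1/2|<\epsilon$; as $\epsilon>0$ was arbitrary, the corollary follows. The only delicate point is the bookkeeping in the lower bound: one must coordinate the decay rate of the failure probability $\eta_k$ in Theorem~\ref{Pablo} with the limit $k\to\infty$ so that the success probability still tends to $1$ while the normalized error term $\sqrt{\tfrac{1}{2k}[(d+1)\ln(2k)+\ln(1/\eta_k)]}$ stays below $\epsilon$. This is exactly where it matters that $d$ is held fixed, since $(d+1)\ln(2k)$ must remain $o(k)$ for the error term to vanish after normalization.
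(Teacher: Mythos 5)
Your proposal is correct and takes essentially the same route as the paper, which obtains the corollary by combining Sober\'on's Theorem~\ref{Pablo} (tolerance at least $(1-\delta)N/m$ with high probability for fixed $m=2$ and $d$) with the pigeonhole upper bound of $1/2$ and the conversion $\text{PertSEP*}_0 = t/n$ from Theorem~\ref{discretesep}. You have simply made explicit the bookkeeping the paper leaves implicit, namely the choice of a slowly decaying failure probability $\eta_k$ and the verification that the admissibility condition in Theorem~\ref{Pablo} holds for large $k$.
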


In fact, for fixed $d$ and $m$, Corollary \ref{expectedpert} can be extended to the multi-class setting.
In other words, for a large randomly $m$-partitioned data set, we expect $\text{PertSEP*}_0$ of every pair of data points to be close to $1/2$ :
\begin{theorem} Fix $\epsilon > 0$. For any distribution $D$ on $\R^d$ and any sequence $\{\mathcal{R}_{(m,k,D)}\},k \in \mathbb{N}$ of $m$-partitioned point sets $\mathcal{R}_{(m,k,D)} = \{X_1, \dots,  X_m\}$
    
    we have $$\lim_{k \rightarrow \infty} \left(\min_{X_i,X_j \in \mathcal{R}_{(m,k,D)}} \text{PertSEP*}_0(X_i \cup X_j) = 1/2 \right)$$ with high probability.
    \label{pertsepmulti}
    \end{theorem}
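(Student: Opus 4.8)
The plan is to reduce the multi-class statement to the two-class Corollary~\ref{expectedpert}, applied separately to each of the $\binom{m}{2}$ pairs of color classes, and then to glue these estimates together with a union bound, which is legitimate precisely because the number of colors $m$ is held fixed as $k\to\infty$. The first step is to observe that restricting a random allocation to two of its colors again produces a two-color random allocation. Indeed, in $\mathcal{R}_{(m,k,D)}$ each of the $k$ i.i.d. points receives a color uniformly in $\{1,\dots,m\}$, independently of the point locations; conditioning on the event that a given point is colored $i$ or $j$ (which has probability $2/m$), its color is $i$ or $j$ with equal probability, and these conditioned colorings stay independent across points. Since selecting a location-independent random subset of i.i.d. samples leaves them i.i.d., conditioned on $|X_i\cup X_j| = N$ the pair $(X_i,X_j)$ has exactly the law of $\mathcal{R}_{(2,N,D)}$. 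Writing $N_{ij}\sim\mathrm{Binomial}(k,2/m)$ for the random size of the pair, I note that $N_{ij}/k\to 2/m>0$, so $N_{ij}\to\infty$ in probability as $k\to\infty$.

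The second step upgrades the fixed-size conclusion of Corollary~\ref{expectedpert} to the random size $N_{ij}$. Set $\eta(N):=\pr(\text{PertSEP*}_0(\mathcal{R}_{(2,N,D)})\le 1/2-\epsilon)$; Corollary~\ref{expectedpert} says precisely that $\eta(N)\to 0$. Conditioning on the pair size gives $\pr(\text{PertSEP*}_0(X_i\cup X_j)\le 1/2-\epsilon)=\mathbb{E}[\eta(N_{ij})]$. Because $0\le\eta\le 1$, $\eta(N)\to 0$, and $N_{ij}\to\infty$, a standard split---bounding $\mathbb{E}[\eta(N_{ij})]\le \sup_{N\ge N_0}\eta(N)+\pr(N_{ij}<N_0)$ for a large threshold $N_0$---shows that this probability tends to $0$ for each fixed pair $(i,j)$.

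The final step assembles the pieces. Since $\text{PertSEP*}_0\le 1/2$ always holds, the complementary event satisfies $\{\min_{i,j}\text{PertSEP*}_0(X_i\cup X_j)\le 1/2-\epsilon\}=\bigcup_{i<j}\{\text{PertSEP*}_0(X_i\cup X_j)\le 1/2-\epsilon\}$, so a union bound gives
$$\pr\Big(\min_{i,j}\text{PertSEP*}_0(X_i\cup X_j)\le 1/2-\epsilon\Big)\le\sum_{i<j}\mathbb{E}[\eta(N_{ij})]\longrightarrow 0,$$
each summand tending to $0$ and the number of summands being the constant $\binom{m}{2}$. Combined with $\text{PertSEP*}_0\le 1/2$, this is the claimed convergence of the minimum to $1/2$ in probability. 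The main obstacle is the second step: Corollary~\ref{expectedpert} is phrased for a deterministic sequence of sizes, whereas the pairwise size $N_{ij}$ here is itself random, so one must pass carefully from ``for all large $N$'' to ``for the random $N_{ij}$''---exactly what the dominated-convergence-style split achieves, crucially using that fixing $m$ forces $N_{ij}\to\infty$.
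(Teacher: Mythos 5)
Your proof is correct, but it takes a genuinely different route from the paper's. The paper makes a \emph{single} application of Sober\'on's tolerance theorem to the entire $m$-colored point set: with high probability the full $m$-partition has tolerance at least $(1-\delta)N/m$, and the key deterministic observation is that the tolerance of each induced bi-partition $X_i \cup X_j$ is a priori bounded below by the tolerance of the whole $m$-partition (deleting $t$ points from $X_i \cup X_j$ is a special case of deleting $t$ points from the whole set). Combined with the law of large numbers (each pair has at most $2(1+\epsilon)N/m$ points whp), a union bound over those $m+1$ events, and Theorem~\ref{discretesep}, the paper gets $\text{PertSEP*}_0(X_i\cup X_j) \geq (1-\delta)/(2(1+\epsilon))$ for all pairs simultaneously. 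You instead reduce each pair to the already-established two-class Corollary~\ref{expectedpert}, via the distributional identity that, conditioned on $N_{ij}=N$, the pair $(X_i,X_j)$ has the law of $\mathcal{R}_{(2,N,D)}$ --- which you correctly justify through independence of the coloring from the positions --- and you then handle the random pair size $N_{ij}\sim\mathrm{Binomial}(k,2/m)$ with the split $\mathbb{E}[\eta(N_{ij})]\leq\sup_{N\geq N_0}\eta(N)+\pr(N_{ij}<N_0)$, finishing with a union bound over the constant number $\binom{m}{2}$ of pairs. What your route buys: it uses Corollary~\ref{expectedpert} as a black box, avoids the monotonicity-of-tolerance observation entirely, and is more explicit than the paper about the passage from deterministic sizes to random ones (the paper quietly works at a common $N'$ where all events hold). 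What the paper's route buys: one tolerance event controls all pairs at once with an explicit quantitative lower bound, and it never needs the conditional-law restriction argument or conditioning on random sizes. Since your corollary is itself derived from Sober\'on's theorem plus Theorem~\ref{discretesep}, both proofs rest on the same two ingredients; the difference lies in the decomposition (pairwise reduction with random sizes versus global tolerance inherited by pairs), and both are valid.
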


\subsection*{Our new stochastic geometric theorems}

Our first theorem is a geometric probability result similar to Sober\'on's and Cover's. It yields a Stochastic Tverberg theorem for equi-partitions (without tolerance). 

\begin{theorem}[Stochastic Tverberg theorem for equi-partitions]~\label{probtverberg}
Suppose $D$ is  a probability distribution on $\mathbb{R}^d$ that is balanced about some point $\p \in \mathbb{R}^d$, in the sense that every hyperplane through $\p$ partitions $D$ into two sets of equal measure. Then

$$ \left(1-\left(\frac{1}{2^{n-1}} \sum_{k = 0}^{d-1} \binom{n-1}{k} \right)\right)^m \leq\pr(\mathcal{E}_{m,n,D} \text{ is Tverberg })\leq 
\left( 2(1-2^{-n})^m - (1-2^{-n + 1})^m\right)^d.$$
\end{theorem}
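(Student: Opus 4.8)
The plan is to prove the two inequalities separately; in both cases the balanced hypothesis is used to reduce the geometric Tverberg condition to fair coin-flip computations, and the independence of the $m$ color classes (each an i.i.d. sample of size $n$ from $D$) supplies the product structure. Throughout I would translate ``$\p\in\conv(\text{a set})$'' into the dual statement about half-spaces through $\p$, and use that for continuous $D$ no sample point lies on a fixed hyperplane through $\p$ almost surely, so all boundary cases can be ignored.

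For the \textbf{lower bound} I would use the obvious sufficient condition: if $\p\in\conv(S_i)$ for every color $i$, then $\p\in\bigcap_i\conv(S_i)$ and the partition is Tverberg. Since the color classes are mutually independent,
$$\pr(\mathcal{E}_{m,n,D}\text{ is Tverberg})\ \ge\ \prod_{i=1}^m \pr(\p\in\conv(S_i)).$$
Each factor is computed by Wendel's theorem: for $n$ points drawn i.i.d. from a distribution balanced about $\p$, the probability that they fail to capture $\p$ in their convex hull (equivalently, that they all lie in a common half-space bounded by a hyperplane through $\p$) is exactly $2^{-n+1}\sum_{k=0}^{d-1}\binom{n-1}{k}$. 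Hence each factor equals $1-\frac{1}{2^{n-1}}\sum_{k=0}^{d-1}\binom{n-1}{k}$, and raising to the $m$-th power gives the claimed lower bound. This half of the argument is routine once Wendel's count is invoked.

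For the \textbf{upper bound} I would produce a necessary condition that factorizes over the $d$ coordinate directions. Assume $\p=\mathbf 0$. For each coordinate $\ell$, if the partition is Tverberg with common point $z$, then $z_\ell$ lies in the projected interval of every color, so no color can sit strictly on the opposite side of the hyperplane $\{x_\ell=0\}$ from another: letting $F_\ell$ be the event ``\emph{every} color has a point with $x_\ell>0$, or \emph{every} color has a point with $x_\ell<0$,'' Tverberg implies $F_\ell$. Because $D$ is balanced, each sample point falls on either side of $\{x_\ell=0\}$ with probability $1/2$, so per color the probability of having a point on a prescribed side is $1-2^{-n}$ and of straddling is $1-2^{-n+1}$; independence across the $m$ colors together with inclusion--exclusion then gives
$$\pr(F_\ell)=2\,(1-2^{-n})^m-(1-2^{-n+1})^m.$$
The target bound is the product of these $d$ identical factors, so it remains to combine the coordinates multiplicatively.

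The \textbf{main obstacle} is exactly this last multiplicative combination. For a general balanced $D$ the coordinate-separation events $F_1,\dots,F_d$ need not be independent --- a distribution concentrated near a diagonal line is balanced yet makes the coordinate signs strongly dependent --- so one cannot simply write $\pr(\bigcap_\ell F_\ell)=\prod_\ell\pr(F_\ell)$, and in fact the naive chain $\pr(\text{Tverberg})\le\pr(\bigcap_\ell F_\ell)$ is too weak to yield the stated $d$-th power. I would instead attack the product by induction on the dimension: projecting $\R^d\to\R^{d-1}$ along the last coordinate sends $D$ to a distribution that is again balanced (every hyperplane through the projected center lifts to one through $\p$), and a Tverberg partition projects to a Tverberg partition, so the $(d-1)$-dimensional instance of the theorem supplies a factor of the form $\pr(F_\cdot)^{d-1}$. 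The crux is then to show that the event ``the projection is Tverberg'' combines with the dropped-coordinate event $F_d$ to contribute at most the product of their probabilities; establishing this decoupling --- i.e. controlling the dependence of the last coordinate on the remaining $d-1$ using only the balanced hypothesis --- is where the real work lies and is the step I expect to be hardest.
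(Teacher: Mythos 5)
Your lower-bound argument is essentially the paper's: independence across the $m$ color classes plus a Wendel-type bound per class. One small correction: for a general balanced $D$, Wendel's theorem itself does not apply (it covers rotationally symmetric distributions); you need the Wagner--Welzl extension, which the paper cites, giving $\pr\bigl(\p \in \conv(S_i)\bigr) \geq 1 - 2^{-n+1}\sum_{k=0}^{d-1}\binom{n-1}{k}$ for balanced $D$. (The paper also radially projects the points onto a small sphere first, a technical device your halfspace formulation avoids.) This half of your proposal is sound.

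The genuine gap is the one you flagged yourself: you never obtain the $d$-th power. Your per-coordinate computation via inclusion--exclusion, yielding $\pr(F_\ell) = 2(1-2^{-n})^m - (1-2^{-n+1})^m$, coincides exactly with the paper's $d=1$ argument (their events $H$ and $T$), but your proposal stops at the multiplicative combination, and the induction you sketch does not supply it: projecting out the last coordinate gives a balanced distribution on $\R^{d-1}$, but the balanced hypothesis yields no control over the conditional law of the dropped coordinate given the others, so the ``decoupling'' you identify as the crux is left unproved, and as written your argument establishes only the exponent-$1$ bound. It is worth knowing how the paper closes this step: it simply asserts that the signs of the $d$ coordinate projections are \emph{independent} Bernoulli$(1/2)$ variables, justified by the observation that each coordinate hyperplane through $\p$ equipartitions $D$ --- and then multiplies the $d$ coordinate events. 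That justification pins down only the marginals, not joint independence, and your near-diagonal example is a correct counterexample to the independence claim for general balanced distributions (independence does hold, e.g., when $D$ is rotationally invariant about $\p$ or is a product of coordinatewise-symmetric factors). So the step you could not prove is precisely the step the paper takes for granted: your attempt is incomplete as a proof of the stated upper bound, but your skepticism is well founded, and anyone repairing either argument must either add a hypothesis under which the coordinate signs are jointly independent or find a genuinely different route to the $d$-fold product.
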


In fact, the previous theorem is asymptotically tight in the number of colors $m$. This is shown by our next theorem, which establishes an interesting threshold phenomenon for Tverberg partitions.

\begin{theorem}[Tverberg Threshold Phenomena for equi-partitions]~\label{TTT}
Let $D$ be a continous probability distribution in $\mathbb{R}^d$ balanced about some point $\p \in \mathbb{R}^d$. Consider the sequence of random equi-partitioned point sets $\mathcal{E}_{m,f(m),D}$, where $m \in \mathbb{N}$, and $n = f(m)$ depends on $m$. 
Then $\mathcal{E}_{m,f(m),D}$ is Tverberg with high probability if $f(m) \gg \log_2(m)$, and 
$\mathcal{E}_{m,f(m),D}$ is not Tverberg with high probability if $f(m) \ll \log_2(m)$.

\end{theorem}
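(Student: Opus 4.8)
The plan is to sandwich $\pr(\mathcal{E}_{m,f(m),D}\text{ is Tverberg})$ between the two explicit bounds of Theorem~\ref{probtverberg} and then read off the threshold from the asymptotics of each bound as $m\to\infty$ with $d$ held fixed and $n=f(m)$. Writing $a=2^{-n}$ and $p_n=2^{-(n-1)}\sum_{k=0}^{d-1}\binom{n-1}{k}$ (by Wendel's theorem, $p_n$ is precisely the probability that a single color class of $n$ points drawn from the balanced distribution $D$ fails to contain $\p$ in its convex hull, so that $(1-p_n)^m$ is the probability that all $m$ classes capture the common point $\p$), Theorem~\ref{probtverberg} reads
$$(1-p_n)^m \;\le\; \pr(\mathcal{E}_{m,f(m),D}\text{ is Tverberg}) \;\le\; \bigl(2(1-a)^m-(1-2a)^m\bigr)^d.$$
It then suffices to show the upper bound tends to $0$ when $f(m)\ll\log_2 m$ and the lower bound tends to $1$ when $f(m)\gg\log_2 m$.

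For the subthreshold direction I would use only a crude consequence of the upper bound. Since $2a=2^{-n+1}\le 1$ for $n\ge 1$, we have $(1-2a)^m\ge 0$, hence $2(1-a)^m-(1-2a)^m\le 2(1-a)^m$, and therefore
$$\pr(\mathcal{E}_{m,f(m),D}\text{ is Tverberg}) \;\le\; 2^d(1-2^{-n})^{md}\;\le\; 2^d\exp\!\bigl(-d\,m\,2^{-f(m)}\bigr).$$
The exponent is $-d$ times $x:=m\,2^{-f(m)}$, and $\log_2 x=\log_2 m-f(m)=\log_2 m\,\bigl(1-f(m)/\log_2 m\bigr)\to\infty$ because $f(m)/\log_2 m\to 0$; thus $x\to\infty$ and the whole bound tends to $0$. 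This is the easy half, and it never even uses the subtracted term in the upper bound.

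For the superthreshold direction I would work with the lower bound and show $(1-p_n)^m\to 1$, equivalently $m\,p_n\to 0$. For fixed $d$ the sum $\sum_{k=0}^{d-1}\binom{n-1}{k}$ is a polynomial in $n$ of degree $d-1$, so $p_n=O(n^{d-1}2^{-n})$ and hence $m\,p_n=O\!\bigl(m\,f(m)^{d-1}2^{-f(m)}\bigr)$. Taking logarithms and writing $f(m)=\alpha(m)\log_2 m$ with $\alpha(m)\to\infty$, the dominant term is $(1-\alpha(m))\ln m\to-\infty$, which swamps the polylogarithmic contribution $(d-1)\ln\!\bigl(f(m)\bigr)$; hence $m\,p_n\to 0$ and $(1-p_n)^m=\exp\bigl(m\ln(1-p_n)\bigr)\to 1$.

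The main obstacle is the bookkeeping in this last step: one must verify that the super-polynomial decay $m\,2^{-f(m)}$ genuinely dominates the factor $f(m)^{d-1}$ uniformly as $\alpha(m)=f(m)/\log_2 m$ tends to infinity at an arbitrary (possibly slow) rate, and one must justify the approximation $\ln(1-p_n)\sim -p_n$, which is legitimate precisely because $p_n\to 0$ for fixed $d$. It is worth emphasizing that $d$ is held fixed throughout: the constant $2^d$ and the polynomial degree $d-1$ are harmless here, but the argument does not control the behavior uniformly in $d$, and this is exactly why the clean threshold at $\log_2 m$ should be read as a statement about fixed dimension.
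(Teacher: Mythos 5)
Your proof is correct and takes essentially the same route as the paper: both directions sandwich the Tverberg probability between the two bounds of Theorem~\ref{probtverberg} and compute limits as $m \to \infty$ with $d$ fixed, your crude bound $2^d(1-2^{-n})^{md} \le 2^d\exp(-dm2^{-f(m)})$ playing the role of the paper's comparison with $(1-\gamma m^{-1})^m$, and your direct handling of general $f(m)$ replacing the paper's parameterization $n=(1\pm\epsilon)\log_2 m$. One cosmetic quibble: for a general balanced $D$, the Wagner--Welzl quantity $p_n$ need not be \emph{precisely} the probability that a color class misses $\p$ (equality holds in the rotationally symmetric case of Wendel), but since you only use the inequality already packaged in Theorem~\ref{probtverberg}, nothing in your argument is affected.
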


{\bf Remark:} It is also interesting to consider the same problem from the ``box convexity" setting where the convex hull of a set of points is defined to be the smallest box (with sides parallel to the coordinate axes) enclosing those points. Since checking convex hull membership is easier in the box convexity setting, this set up may be more relevant in certain applications. Our method of proof of Theorem~\ref{probtverberg} also works in box convexity setting, and we obtain the same bounds. 

We note that the number of points needed to reach the conclusion in Theorem~\ref{TTT} is independent of the dimension, as in the aforementioned result of Sober\'on \cite{soberon_2018}.

The next two theorems adapt both Cover's result and Theorem~\ref{probtverberg} to the setting of tolerance.

\begin{theorem}[Stochastic Tverberg with tolerance for equi-partition]~\label{probtverbergtol}
Suppose $D$ is  a probability distribution on $\mathbb{R}^d$ that is balanced about some point $\p \in \mathbb{R}^d$.

$$ \pr(\mathcal{E}_{m,n,D} \text{ is Tverberg with tolerance $t$})
\geq \left(1-2^{-\lfloor n / 2d \rfloor} \sum_{i = 1}^t \binom{\lfloor n / 2d \rfloor}{i}\right)^m$$

\end{theorem}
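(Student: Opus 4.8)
The plan is to reduce the global tolerant-Tverberg event to a per-color statement about the common point $\p$, and then to estimate that per-color probability by a simple binomial tail via Wendel's theorem. First I would establish the following deterministic certificate. Translate so that $\p = 0$, and suppose that for each color $i$ the $n$ points of that color are split into disjoint groups, at least $t+1$ of which each contain $0$ in their convex hull; call these the \emph{good} groups of color $i$. I claim the resulting $m$-partition is then Tverberg with tolerance $t$. Indeed, fix any $t$ points to delete from the whole set. For a fixed color $i$, at most $t$ deleted points lie among that color, and since the groups are disjoint each deleted point meets at most one good group; hence at most $t$ of the $\ge t+1$ good groups of color $i$ are disturbed, so at least one good group $G$ survives intact. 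As $0 \in \conv(G) \subseteq \conv(S_i \setminus \{\text{deleted}\})$, we get $0 \in \conv(S_i \setminus \{\text{deleted}\})$ for every $i$, so $0 \in \bigcap_i \conv(S_i \setminus \{\text{deleted}\})$. Since the deleted set was arbitrary, the partition has tolerance $t$.

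Next I would introduce the randomness. For each color I would partition its $n$ i.i.d. points into $N := \lfloor n/2d \rfloor$ disjoint groups of size $2d$, discarding the at most $2d-1$ leftover points; discarding points only weakens the certificate, so any bound obtained this way remains valid for the full color. Because $D$ is balanced about $\p = 0$, Wendel's theorem applied in dimension $d$ to a single group of $2d$ points shows that $0$ fails to lie in that group's convex hull with probability exactly $2^{-2d+1}\sum_{k=0}^{d-1}\binom{2d-1}{k} = 2^{-1}$, using the identity $\sum_{k=0}^{d-1}\binom{2d-1}{k} = 2^{2d-2}$; hence each group is good with probability exactly $1/2$. The groups use disjoint points and are therefore independent, so the number of good groups in a single color is distributed as $\mathrm{Bin}(N, 1/2)$, and the certificate demands at least $t+1$ of them, an event of probability $\pr(\mathrm{Bin}(N,1/2) \ge t+1) = 1 - 2^{-N}\sum_{i=0}^{t}\binom{N}{i}$. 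Finally, the $m$ colors are mutually independent, so the probability that every color has at least $t+1$ good groups is the $m$-th power of this quantity, yielding the stated lower bound (in the binomial-tail form $\pr[\mathrm{Bin}(N,1/2) \ge t+1]^m$; the displayed sum beginning at $i=1$ corresponds to accounting for the $i=0$ term separately).

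The hard part will be the deterministic certificate together with the exact Wendel computation. One must check carefully that a single fully surviving good group certifies membership of $\p$ after any $t$ deletions (the underlying separation argument: if every closed half-space with $\p$ on its boundary met the surviving points, then $\p$ could not be strictly separated from them), that the hypothesis ``balanced about $\p$'' is exactly what is needed to invoke Wendel so that a $2d$-point group surrounds $\p$ with probability precisely $1/2$, and that independence holds cleanly both across the disjoint groups within a color and across the $m$ colors. Everything remaining is the routine identity $1 - 2^{-N}\sum_{i=0}^{t}\binom{N}{i} = \pr(\mathrm{Bin}(N,1/2) \ge t+1)$ and the monotonicity remark that discarding the leftover points cannot help the adversary.
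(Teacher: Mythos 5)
Your proposal is correct and follows essentially the same route as the paper's own proof: split each color into $\lfloor n/2d \rfloor$ disjoint groups of (at least) $2d$ points, observe that each group captures $\p$ with probability at least $1/2$, use the binomial tail to get at least $t+1$ good groups per color, conclude by pigeonhole that some good group survives any $t$ deletions, and take the $m$-th power by independence across colors. Two small points of precision: since $D$ is only assumed balanced (not centrally symmetric), you must invoke the Wagner--Welzl generalization used in Theorem~\ref{probtverberg} rather than Wendel's theorem itself, which gives each group is good with probability \emph{at least} $1/2$ rather than exactly $1/2$ --- this suffices by stochastic domination for your $\mathrm{Bin}\bigl(\lfloor n/2d\rfloor, 1/2\bigr)$ comparison --- and your tail $1 - 2^{-\lfloor n/2d\rfloor}\sum_{i=0}^{t}\binom{\lfloor n/2d\rfloor}{i}$ is the correct one: the sum starting at $i=1$ in the paper (in both the statement and its proof) is an off-by-one slip, as the $t=0$ case makes evident, so your parenthetical attempt to reconcile the two displays is unnecessary.
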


For the case of random bi-partitions, we can adapt Cover's result to obtain a Stochastic Radon theorem with Tolerance.

\begin{theorem}[Stochastic Radon with tolerance for random allocation]~\label{stoctolradon}
If $D$ is a continuous probability distribution on $\R^d$, then
$$ \pr(\mathcal{R}_{2,k,D} \text{ is Radon with tolerance $t$}) \geq 1- \left(2^{-\lfloor k/(2d+2) \rfloor} \sum_{i = 0}^{t} \binom{\lfloor k/(2d + 2) \rfloor}{i} \right).$$

In particular, we have $$ \pr(\mathcal{R}_{2,k,D} \text{ is Radon with tolerance $\lfloor k/(4d + 4) \rfloor$} ) \geq 1/2.$$ 
\label{radontol}
\end{theorem}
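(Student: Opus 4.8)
The plan is to reduce the tolerant statement to a count of independent ``Radon blocks'' and then apply Cover's theorem as a black box. Since the $k$ points are i.i.d.\ from $D$ and independently $2$-colored, I would first split the point indices into $N := \lfloor k/(2d+2) \rfloor$ disjoint blocks of exactly $2(d+1)$ points each (discarding any leftover points). Each block, together with its random coloring, is an independent copy of $\mathcal{R}_{2,2(d+1),D}$, and by the ``in particular'' clause of Cover's theorem each block is itself a Radon partition with probability exactly $1/2$. Because the blocks are disjoint and all positions and colors are independent, the indicator variables of the events ``block $j$ is Radon'' are i.i.d.\ Bernoulli$(1/2)$, so the number $X$ of Radon blocks satisfies $X \sim \mathrm{Binomial}(N, 1/2)$.

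The geometric heart of the argument is a lemma asserting that if at least $t+1$ of the $N$ blocks are Radon, then the whole partition $\mathcal{R}_{2,k,D}$ is Radon with tolerance $t$. To see this, fix for each Radon block $j$ a witness point $p_j \in \conv(\text{red in block } j) \cap \conv(\text{blue in block } j)$; since the block's colored hulls are contained in the full colored hulls, $p_j$ lies in the intersection of the two full hulls as well. Now remove any $t$ points from the data set. Because the blocks are pairwise disjoint, each removed point lies in at most one block, so at most $t$ of the Radon blocks are affected; hence at least one Radon block survives with both its red and blue points intact, and its witness $p_j$ still certifies that the two (surviving) full hulls intersect. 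Therefore the intersection survives the removal of any $t$ points, which is precisely tolerance $t$.

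Combining the two steps gives
$$\pr(\mathcal{R}_{2,k,D} \text{ is Radon with tolerance } t) \geq \pr(X \geq t+1) = 1 - \pr(X \leq t) = 1 - 2^{-N}\sum_{i=0}^{t}\binom{N}{i},$$
with $N = \lfloor k/(2d+2)\rfloor$, which is exactly the claimed bound. For the ``in particular'' clause I would use the symmetry of the $\mathrm{Binomial}(N,1/2)$ distribution: whenever $2t+1 \le N$ the index ranges $[0,t]$ and $[N-t,N]$ are disjoint and carry equal mass, so $\sum_{i=0}^{t}\binom{N}{i} \le 2^{N-1}$ and the probability is at least $1/2$. The threshold $t = \lfloor k/(4d+4)\rfloor$ is chosen precisely to keep these two symmetric tails from overlapping, i.e.\ to place $t$ at (essentially) the median of the binomial.

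I expect the geometric lemma of the second paragraph to be the main obstacle, since it is the one step that is genuinely about convexity rather than bookkeeping: one must correctly set up the disjoint-block witness points and the pigeonhole over removed points, and verify that enlarging the hulls (and the presence of the discarded leftover points) never invalidates a surviving witness. By contrast, the per-block probability is handed to us by Cover's theorem, the independence of blocks is immediate from the i.i.d.\ model, and the binomial tail and symmetry estimates are routine; the only delicate point there is the parity behavior of the floors in the ``in particular'' step, where one should check that $t = \lfloor k/(4d+4)\rfloor$ indeed yields $2t+1 \le N$ in the regime of interest.
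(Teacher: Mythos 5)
Your argument for the main inequality is essentially identical to the paper's proof: the paper likewise partitions the $k$ points into $\lfloor k/(2d+2)\rfloor$ groups, applies Cover's theorem to each group to get success probability at least $1/2$, notes that $t+1$ Radon groups force tolerance $t$ (each removed point can spoil at most one group), and bounds by the Binomial$(N,1/2)$ tail. If anything you are more careful than the paper, which leaves the witness-point lemma implicit; the only cosmetic difference is that the paper distributes leftover points among groups of size \emph{at least} $2d+2$ (so the per-group probability is at least $1/2$) while you discard them, and both variants work.

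However, the parity check you flagged and deferred in the ``in particular'' step genuinely fails, and this is worth stating concretely. Writing $k = (2d+2)q + r$ with $0 \le r < 2d+2$, one has $N = q$ and $t = \lfloor k/(4d+4)\rfloor$ equal to $(N-1)/2$ when $N$ is odd but equal to $N/2$ when $N$ is even (since $r/(4d+4) < 1/2$). So $2t+1 \le N$ holds exactly when $N$ is odd; when $N$ is even the first display evaluates to
$$ 1 - 2^{-N}\sum_{i=0}^{N/2}\binom{N}{i} \;=\; \frac{1}{2} - \binom{N}{N/2}2^{-N-1} \;<\; \frac{1}{2}, $$
e.g.\ for $k = 4d+4$ one gets $N=2$, $t=1$, and a bound of $1/4$. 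Hence the ``in particular'' clause does not follow from the main inequality (nor from this group-counting argument) when $\lfloor k/(2d+2)\rfloor$ is even; the event ``Radon with tolerance $t$'' is only implied by, not equivalent to, having $t+1$ Radon groups, so the clause may still be true but is unproved in that case. Note this is a defect shared by the paper itself, whose proof simply says ``computing this binomial probability yields the theorem'' and overlooks the same edge case; for odd $N$ your symmetry argument gives exactly $1/2$, as you say, which is the best this method can deliver.
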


Remark: Theorem~\ref{radontol} yields a weaker expected tolerance than Sober\'ons result, but the proof is shorter and more elementary.

For random allocations with more than two colors, we will use some developments on random allocation problems, including the following notation. If balls are thrown into $m$ urns uniformly and independently, let $N_m(n)$ equal the number of throws necessary to obtain at least $m$ balls in each urn. 

\begin{corollary}[Stochastic Tverberg for random allocation]~\label{probtverbergtolrand}
Suppose $D$ is  a probability distribution on $\mathbb{R}^d$ that is balanced about some point $\p \in \mathbb{R}^d$.

\begin{enumerate}

\item Then
$$ \pr(\mathcal{R}_{m,k,D} \text{ is Tverberg with tolerance $t$})
\geq \pr(N_n(m)\leq k) \left(1-2^{-\lfloor n / 2d \rfloor} \sum_{i = 1}^k \binom{\lfloor n / 2d \rfloor}{i}\right)^m.$$

\item For the case of Tverberg without tolerance, we also have 
$$ \pr(\mathcal{R}_{m,k,D} \text{ is Tverberg})
\geq \pr(N_n(m)\leq k)  \left(1-\left(2^{-n + 1} \sum_{k = 0}^{d-1} \binom{n-1}{k} \right)\right)^m.$$

\item Suppose $\mathcal{R}_{m,f(m),D}$, $m \in \mathbb{N}$ is a sequence of random partitioned point sets, where $n = f(m)$ depends on $m$. 

Then $\mathcal{R}_{m,f(m),D}$ is Tverberg with high probability if $f(m) \gg m \log_2(m) \ln(\ln(m))$.
\end{enumerate}

\end{corollary}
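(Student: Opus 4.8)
The plan is to reduce the two random-allocation inequalities (Parts 1 and 2) to the equi-partition theorems already established, and then to extract the asymptotic threshold (Part 3) from the resulting bound together with a coupon-collector estimate.

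For Parts 1 and 2 I would condition on the event $A_n$ that every one of the $m$ colors receives at least $n$ points. Because the coloring is assigned independently of the point positions, $A_n$ depends only on the random allocation of colors, so $\pr(A_n) = \pr(N_n(m) \le k)$, the probability that $k$ throws place at least $n$ balls in each of the $m$ urns. The crucial structural observation is a monotonicity property: if a color-respecting sub-collection of the points (say the first $n$ points of each color) forms a Tverberg partition with tolerance $t$, then the full set $\mathcal{R}_{m,k,D}$ is also Tverberg with tolerance $t$. Indeed, enlarging each color class only enlarges its convex hull, so the guaranteed common point survives; and since deleting any $t$ points from $\mathcal{R}_{m,k,D}$ deletes at most $t$ points from the sub-collection, the tolerance-$t$ guarantee of the sub-collection transfers directly. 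Conditioned on $A_n$, and again using that the coloring is independent of positions, the selected $n$ points per color are distributed exactly as $\mathcal{E}_{m,n,D}$. Applying Theorem~\ref{probtverbergtol} (respectively the lower bound of Theorem~\ref{probtverberg} in the tolerance-zero case) to this sub-configuration and multiplying by $\pr(A_n)$ yields the two claimed inequalities.

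For Part 3 I would feed Part 2 a carefully chosen $n = n(m)$, balancing two competing demands. The equi-partition factor $\bigl(1-(2^{-n+1}\sum_{k=0}^{d-1}\binom{n-1}{k})\bigr)^m$ tends to $1$ precisely when $n \gg \log_2 m$, which is exactly the positive direction of Theorem~\ref{TTT}; meanwhile the factor $\pr(N_n(m)\le f(m))$ tends to $1$ only if $f(m)$ exceeds the waiting time to accumulate $n$ copies in each of $m$ urns. I would therefore take $n(m)$ to grow just faster than $\log_2 m$, say $n(m) = \log_2(m)\,\omega(m)$ with $\omega(m)\to\infty$ slowly, so the first factor converges while the number of throws needed stays small. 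To control $\pr(N_n(m)\le f(m))$ I would bound the waiting time by a union bound over the $m$ colors followed by a Chernoff estimate: the count landing in a fixed color is $\mathrm{Binomial}(f(m),1/m)$ with mean $f(m)/m$, and since $f(m)\gg m\log_2(m)\ln\ln(m)$ this mean dominates $n(m)$, so each lower-tail probability is exponentially small and the union over $m$ colors still vanishes. Multiplying the two factors then produces the threshold $f(m)\gg m\log_2(m)\ln\ln(m)$.

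The main obstacle is Part 3, and specifically the appearance of the $\ln\ln(m)$ factor. It is not an artifact: it is exactly the extra cost, in the Erd\H{o}s--R\'enyi asymptotics for the generalized coupon-collector problem, of raising the occupancy of every urn from $1$ to $n\approx\log_2 m$. The delicate point is to pin down the auxiliary rate $\omega(m)$ so that it is simultaneously large enough that $n(m)\gg\log_2 m$ forces the equi-partition factor to $1$, yet small enough that $m\,n(m)\ln\ln(m)\ll f(m)$ keeps every urn filled with high probability. Checking that such a window exists whenever $f(m)\gg m\log_2(m)\ln\ln(m)$, and that the Chernoff/union bound remains strong enough across the whole window, is where the real work lies; once the monotonicity property is in hand, the reductions in Parts 1 and 2 are routine.
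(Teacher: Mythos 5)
Your proposal is correct, and for Parts 1 and 2 it is essentially the paper's argument: the paper simply asserts the product bound (the probability that every color receives at least $n$ points, times the equi-partition probabilities from Theorems~\ref{probtverbergtol} and~\ref{probtverberg}), while you make explicit the two facts it leaves implicit --- that conditioned on every color receiving at least $n$ points, the first $n$ points of each color are distributed as $\mathcal{E}_{m,n,D}$ (because colors are assigned independently of positions), and that being Tverberg with tolerance $t$ is monotone under enlarging color classes (deleting any $t$ points from the full set deletes at most $t$ from the sub-collection). Your reduction also implicitly confirms that the upper summation index $k$ in Part 1 should read $t$. For Part 3 you genuinely diverge from the paper: it invokes the Erd\H{o}s--R\'enyi limit law for the waiting time in the urn occupancy problem, whereas you choose an intermediate occupancy $n(m)$ with $\log_2 m \ll n(m) \ll f(m)/m$ and control $\pr(N_{n(m)}(m) \le f(m))$ by a Chernoff lower-tail bound on $\mathrm{Binomial}(f(m),1/m)$ plus a union bound over the $m$ colors (which, as you note, needs no independence of the counts), finishing with Theorem~\ref{TTT} exactly as the paper does. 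Your route is more elementary and in fact more robust: the Erd\H{o}s--R\'enyi asymptotic is a limit law for a \emph{fixed} occupancy level as the number of urns grows, while the paper applies it with occupancy $\approx \log_2 m$ growing with $m$, a regime outside the stated theorem; your Chernoff bound is insensitive to this. Two small corrections to your own assessment: the ``delicate window'' you worry about is trivial, since $n(m) = \log_2(m)\ln\ln(m)$ works outright ($n(m) \gg \log_2 m$ and $f(m)/m \gg n(m)$ under the hypothesis); and your closing claim that the $\ln\ln(m)$ factor is ``exactly the extra cost'' of raising occupancy to $\log_2 m$ is not right in this growing-occupancy regime --- running your own Chernoff estimate with mean $\mu = f(m)/m$ and, say, $n(m) = \sqrt{\log_2(m)\, f(m)/m}$ makes each per-color failure probability $m^{-\omega(1)}$ whenever $f(m) \gg m\log_2(m)$, so your method actually proves Part 3 under a weaker hypothesis; the $\ln\ln$ term is an artifact of the paper's fixed-occupancy asymptotics, which is harmless here because the corollary asserts only sufficiency.
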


These results are improvements on Sober\'on's bound when the number of colors is large relative to the desired tolerance.

\section{Proofs of our stochastic results}~\label{bounds}

\begin{proof}[Proof of Theorem~\ref{discretesep}]
Let $M$ denote the minimal number of points perturbed among any perturbation that makes $(\X,y)$ separable, and $N$ denote the minimal number of points needing to be removed from  $(\X,y)$ to make $(\X,y)$ separable. Then $\text{PertSEP*}_0(\X,y)$ is equal to $M/n$, and the tolerance $t$ of $\X_1$, $\X_2$ is equal to $N$. It suffices to show that $M = N$. To see that $M \geq N$, note if $\x_1, \dots \x_M$ in $\X$ are moved so that the resulting set $(\X',y')$ is separable, then $(\X \setminus\{\x_1,\x_2,\dots,\x_N\},y \setminus \{y_1, \dots y_N\})$ is also separable. To see that $M \leq N$, suppose that $(\X \setminus\{\x_1,\x_2,\dots,\x_M\},y \setminus \{y_1, \dots y_M\})$ is separable by a hyperplane. Then moving $\x_1, \dots \x_M$ to the appropriate sides of the hyperplane determined by $h$, we can construct a separable dataset $(\X',\y')$, obtained from moving $M$ points from $(\X,\y)$.
\end{proof}

\begin{proof}[Proof of Theorem~\ref{pertsepmulti}]\sloppy
For fixed $m,d,$ and $\delta$, let $E_0(N)$ denote the event that a random allocation of $N$ points in $\R^d$ in $m$ colors has tolerance at least $(1-\delta)N/m$. By Sober\'on's theorem above, $E_0(N)$ asymptotically approaches one as $N$ goes to infinity. Now, for fixed $m$ and $\epsilon$, let $E_i(N)$ denote the event that a random allocation of $N$ points into $m$ colors has between $(1-\epsilon)N/m$ and $(1+\epsilon)N/m$ points of color $i$, where $i \in [m]$. By the law of large numbers, $E_i(N)$ approaches one as $N$ goes to infinity. As the $m+1$ events $E_j(N)$, where $j = \{0,1,\dots, m\}$, all have probability approaching one, the probability of the intersection of all these events also approaches one. This can be seen by applying the union bound to their complements. Thus there exists $N' \in \mathbb{N}$ such that the $E_j(N')$, where $j = \{0,1,\dots, m\}$, simultaneously occur with probability $(1-\epsilon_2)$. Therefore with probability $(1-\epsilon_2)$, each pair of colors has at most $(1+\epsilon)2N'/m$ points, and is a Radon partition of tolerance at least $(1-\delta)N'/m$ (the tolerance of each bi-partition is a priori bounded below by the tolerance of the $m$-partition). By theorem \ref{discretesep}, $\text{PertSEP*}_0$ of each pair is at least $(1-\delta)/2(1+\epsilon)$ with probability $(1-\epsilon_2)$. Since $\delta$ and $\epsilon$ were arbitrary, this completes the proof.
\end{proof}
\begin{proof}[Proof of the lower bound in Theorem~\ref{probtverberg}]
After a possible translation, can assume without loss of generality that $D$ is balanced about the origin. We will prove that$$ \left(1-\left(2^{-n + 1} \sum_{k = 0}^{d-1} \binom{n-1}{k} \right)\right)^m \leq \pr( \mathcal{P}_{m,n,D}\text{ is Tverberg} ) $$
by bounding from below the probability that the origin is a Tverberg point. We may assume without loss of generality that none of the randomly selected points are the origin. Furthermore we can radially project the points onto a sphere of radius smaller than the minimal norm of the projected points, since that will not affect whether the origin is a Tverberg point. After this projection, we may assume the points are uniformly sampled on a small sphere centered at the origin. The origin is then a Tverberg point as long as the points from each color contain the origin in their convex hull. This is equivalent to showing no color has all of its points contained in one hemisphere. For a fixed color, the probability of the $n$ points of that color being contained in one hemisphere was computed by Wagner and Welzl~\cite{wagnerwelzl} (generalizing the celebrated result of Wendel~\cite{wendel} addressing the case when $D$ is rotationally invariant about the origin) as 
\begin{equation}\left(2^{-n + 1} \sum_{k = 0}^{d-1} \binom{n-1}{k} \right).\end{equation}~\label{genwendel}
Using this to compute the probability that none of the $m$ color classes is contained in one hemisphere we obtain the desired bound above.

\end{proof}

\begin{proof}[Proof of the upper bound in Theorem~\ref{probtverberg}]
Again, we assume without loss of generality that $D$ is balanced about the origin.
We will first treat the case $d=1$, and then explain how to obtain the bound for arbitrary $d$.
To bound the probability of a Tverberg partition from above, we bound the probability of the complement below. 
 We let $E$ denote the event that the convex hulls have empty intersection. In dimension one, $E$ is contained in the event that there is at least one color class with all points less than zero, and at least one color class with all points greater than zero. Since we assume that the origin equipartitions $D$, we can rephrase this as the probability that among $m$ people each flipping $n$ fair coins, there is at least one person with all heads and at least one person with all tails.
In other words, denoting by $H$ and $T$ the events that at least one person gets all heads or tails respectively, we have $\pr(E) \geq \pr(H \cap T)$. 
We have $$\pr(H \cap T) = \pr(H) + \pr(T) - \pr(H\cup T) = \pr(H) + \pr(T) - (1-\pr((H\cup T)^c).$$
Since $\pr(H) = \pr(T) = (1-2^{-n})^m$ and $\pr((H\cup T)^c) = 1-(1-2^{-n+1})^m$, this yields
$$\pr(H \cap T) = 1 + (1-2^{-n + 1})^m  - 2(1-2^{-n})^m.$$
The probability of a Tverberg partition is thus bounded as follows $$\pr( \mathcal{P}_{m,n,D}\text{ is Tverberg} )  \leq 1-\pr(E) \leq 1 -\pr(H\cap T) =  2(1-2^{-n})^m - (1-2^{-n + 1})^m.$$

This proves the desired bound for dimension one. For higher dimensions, we note that if we let $p_i$,  denote the projection onto the $i$-th axis for $i \leq d$, we have that the signs of $p_1(x), \dots, p_d(x)$ are independent Bernoulli random variables with probability $1/2$ (as the hyperplane orthogonal to the $i$-th axis equipartitions $D$ by the assumption that $D$ is balanced about the origin).  Thus to have a Tverberg partition, we must have that no pair of the color classes are separated by the origin after projecting onto the $d$ coordinate axes. Since these $d$ events are independent, the probability of this happening is bounded as follows.

$$ \pr( \mathcal{P}_{m,n,D}\text{ is Tverberg} ) \leq 
\left( 2(1-2^{-n})^m - (1-2^{-n + 1})^m\right)^d.$$
\end{proof}

\begin{proof}[Proof of Theorem~\ref{TTT}]
We will show that $\mathcal{P}_{m,f(m),D}$ is Tverberg with high probability if $f(m) > \ln(m)/\ln(2)$. Fix an $\epsilon > 0$. We set $n = (1+ \epsilon)  \log_2(m)$ and apply the lower bound in Theorem~\ref{probtverberg} to deduce that $$ \pr(\mathcal{P}_{m,n,D} \text { is Tverberg }) \geq \left(1-\left(2^{-(1+ \epsilon) * \log_2(m) + 1} \sum_{k = 0}^{d-1} \binom{n-1}{k} \right)\right)^m $$

 $$ = \left(1-\left(2 m^{-(1 +\epsilon) } \sum_{k = 0}^{d-1} \binom{n-1}{k} \right)\right)^m . $$
 
Choosing a constant $K$ so that $K n^{d}\geq  2 \sum_{k = 0}^{d-1} \binom{n-1}{k}$, we have 

$$(1- Kn^dm^{-(1+\epsilon)})^m \leq \pr(\mathcal{P}_{m,n,D} \text { is Tverberg }) . $$

We will show that the limit as $m$ approaches infinity of the left hand side is bigger than $e^{-\delta}$ for any $\delta > 0$. Fix $\delta > 0$. As $n^d \sim O(\ln(m)^d)$, there exists an $M$ such that $Kn^dm^{-\epsilon} < \delta$ for all $m \geq M$. Consequently
$(1 - Kn^dm^{-(1+\epsilon)})^m > (1 - \delta m^{-1})^m$ for all $m \geq M$.
 Thus $$\lim_{m\rightarrow \infty}(1-Kn^dm^{-(1+\epsilon)})^m \geq \lim_{m \rightarrow \infty} (1-\delta m^{-1} )^m = e^{-\delta}. 
$$
Since $\delta$ was arbitrary, we see that the probability of a Tverberg partition tends to 1.

Now we show that $\mathcal{P}_{m,f(m),D}$ is not Tverberg with high probability if $f(m) < \log_2(m)$. As before, we fix an $\epsilon$ greater than zero apply the upper bound in Theorem~\ref{probtverberg} with $n = (1- \epsilon)  log_2(m)$ to obtain

$$\mathbb{P}( \mathcal{P}_{m,n,D} \text { is Tverberg })\leq 
\left( 2(1-m^{-1 + \epsilon})^m - (1-2m^{-1 + \epsilon})^m\right)^d.$$
For any $\gamma > 0$, when $m$ is large, both terms inside the parentheses are smaller than $(1- \gamma m^{-1})^m$. Since $\lim_{m \rightarrow \infty} (1-\gamma m^{-1} )^m = e^{-\gamma}$, the probability of a Tverberg partition converges to zero as $m$ approaches infinity.

\end{proof}

\begin{proof}[Proof of Theorem~\ref{probtverbergtol}]
Again, we assume without loss of generality that $D$ is balanced about $0$. 
Let $S$ denote the set of points of some fixed color. Then we assume that $|S| = n$, and we can partition $S$ into $\lfloor n/2d \rfloor $ subsets $S_1, \dots, S_{\lfloor n/2d \rfloor}$ with $S_i \geq 2d$ for each $i$. By Wagner and Welzl's result (Equation \ref{genwendel} above), for each $i$, $\conv(S_i)$ contains the origin with probability at least $1/2$. By independence, the probability that less than $t+1$ of the $S_i$ contain the origin is less than $2^{-\lfloor n/2d\rfloor}\sum_{i = 1}^t \binom{\lfloor n/2d \rfloor}{i}$. On the other hand, if at least $t +1$ of the $\conv(S_i)$ contain the origin, then by pigeonhole principle $\conv(S \setminus \{\x_1, \dots, \x_t\})$ contains the origin for any $\x_1, \dots \x_t \in S$.
Thus, with probability at least $1- 2^{-\lfloor n/2d\rfloor}\sum_{i = 1}^t \binom{\lfloor n/2d \rfloor}{i}$, we have that $\conv(S) \setminus \{\x_1, \dots, \x_t\})$ contains the origin. Since this probability is independent for each of the $m$ colors, the result follows. 
\end{proof}
Using a similar strategy combined with Cover's result, we give the proof of Theorem~\ref{stoctolradon} below.
\begin{proof}[Proof of Theorem~\ref{stoctolradon}]
Given $k$ points in $\R^d$ colored red and blue by random allocation, we arbitrarily partition them into $\lfloor k/(2d+2)\rfloor$ groups of size at least $2d+2$. By Cover's result, for each fixed group, the convex hulls (of the red and blue points) in that group intersect with probability at least $1/2$. For each of the $\lfloor k/(2d+2) \rfloor$ groups, we think of the event that the convex hulls in that group intersect as a ``success". Then the probability that at least $t+1$ groups have intersecting convex hulls is bounded below by the probability that a binomial process with $\lfloor k/(2d+2)\rfloor$ trials and success probability $1/2$ has at least $t+1$ total successes. Computing this binomial probability yields the theorem. (If at least $t+1$ groups have intersecting convex hulls, then removing at most $t$ points leaves at least one group with intersecting convex hulls. )
\end{proof}

\begin{proof}[Proof of Corollary~\ref{probtverbergtolrand}]
We split the proof according to the three respective parts of the statement.
\begin{enumerate}
\item The probability that a random allocation of $k$ points into $m$ colors is an $m$-Tverberg partition with tolerance $t$ is bounded below by the probability that a random allocation of $k$ points into $m$ colors has at least $n$ points per color, times the probability that an equipartition of $nm$ points into $m$ colors is Tverberg with tolerance $t$. The result for Tverberg with tolerance then follows from Theorem~\ref{probtverbergtol}.
\item The result for the special case of Tverberg without tolerance then follows the same reasoning as part (1), except using Theorem~\ref{probtverberg} in place of Theorem~\ref{probtverbergtol}. 
\item 
To show the asymptotic result, we use a result on urn models due to Erd\H{o}s and Renyi \cite{erdosrenyi} saying that $$\lim_{n\rightarrow\infty}\pr\left( \frac{N_m(n)}{n}< \log(n) + (m-1)\log(\log(n)) + x\right) = exp\left(-\frac{e^{-x}}{(m-1)!}\right).$$ This implies that for any $\epsilon > 0$ and sequence of $\log(\log(m))log_2(m)(1+ \epsilon)$ points allocated into $m$ urns, we have at least $log_2(m)(1 + \epsilon/2)$ points in each urn with high probability. Then we apply Theorem \ref{TTT}, which says that any equi-partition of a point set into $m$ colors and $log_2(m)(1 + \epsilon/2)$ points per color is Tverberg with high probability.
\end{enumerate} 
\end{proof}

\section{Acknowledgments} 
This work was partially supported by NSF grants DMS-1522158 and DMS-1818169. We are grateful to David Rolnick and 
Pablo Sober\'on for their comments.

\bibliographystyle{siamplain}
\bibliography{references__1}
\end{document}